  \providecommand\BibTeX{{%
    \normalfont B\kern-0.5em{\scshape i\kern-0.25em b}\kern-0.8em\TeX}}}
\crefname{definition}{Definition}{Definitions}
\crefname{theorem}{Theorem}{Theorems}
\crefname{lemma}{Lemma}{Lemmas}
\crefname{corollary}{Corollary}{Corollaries}
\crefname{algocf}{Algorithm}{Algorithms}
\definecolor{DarkGreen}{RGB}{0,100,0}
\definecolor{redish}      {rgb}{0.8, 0.1, 0.1}
\definecolor{blueish}     {rgb}{0  , 0  , 0.4}
\definecolor{greenish}    {rgb}{0  , 0.6, 0  }
\definecolor{yellowish}   {rgb}{0.8, 0.5, 0  }
\definecolor{redishBG}    {rgb}{1  , 0.3, 0.3}
\definecolor{blueishBG}   {rgb}{0.4, 0.4, 1  }
\definecolor{greenishBG}  {rgb}{0.2, 1  , 0.2}
\definecolor{yellowishBG} {rgb}{1  , 0.7, 0  }
\newcommand{\Alg}   {Algorithm}
\newcommand{\Apx}   {Appendix}
\newcommand{\Cor}   {Corollary}
\newcommand{\Def}   {Definition}
\newcommand{\Eq}    {Equation}
\newcommand{\Fig}   {Figure}
\newcommand{\Lem}   {Lemma}
\newcommand{\Prop}  {Proposition}
\newcommand{\Rmrk}  {Remark}
\newcommand{\Sec}   {Section}
\newcommand{\Tbl}   {Table}
\newcommand{\Thm}   {Theorem}
\newcommand{\Cnd}   {Condition}
\newcommand{\Prob}  {Problem}
\newcommand{\Ex}    {Example}
\newcommand{\Itm}   {Item}
\newcommand{\Typ}   {Type}
\newtheorem{remark}{\Rmrk}
\NewDocumentCommand{\aref}{m}{%
  \IfBeginWith{#1}{sec:} {\Sec~\ref{#1}}{%
  \IfBeginWith{#1}{thm:} {\Thm~\ref{#1}}{%
  \IfBeginWith{#1}{lem:} {\Lem~\ref{#1}}{%
  \IfBeginWith{#1}{cor:} {\Cor~\ref{#1}}{%
  \IfBeginWith{#1}{def:} {\Def~\ref{#1}}{%
  \IfBeginWith{#1}{apx:} {\Apx~\ref{#1}}{%
  \IfBeginWith{#1}{alg:} {\Alg~\ref{#1}}{%
  \IfBeginWith{#1}{fig:} {\Fig~\ref{#1}}{%
  \IfBeginWith{#1}{tbl:} {\Tbl~\ref{#1}}{%
  \IfBeginWith{#1}{rem:} {\Rem~\ref{#1}}{%
  \IfBeginWith{#1}{cnd:} {\Cnd~\ref{#1}}{%
  \IfBeginWith{#1}{prb:} {\Prob~\ref{#1}}{%
  \IfBeginWith{#1}{itm:} {\Itm~\ref{#1}}{%
  \IfBeginWith{#1}{eq:}  {\Eq~\ref{#1}}{%
  \IfBeginWith{#1}{ex:}  {\Ex~\ref{#1}}{%
  \IfBeginWith{#1}{typ:} {\Typ~\ref{#1}}{%
  \IfBeginWith{#1}{prop:}{\Prop~\ref{#1}}{%
  \errmessage{class of label '#1' is not defined.}%
  }}}}}}}}}}}}}}}}}%
}
\newenvironment{myitems}{\begin{itemize}}{\end{itemize}}
\DeclarePairedDelimiter{\Ceil}  {\lceil}    {\rceil}
\DeclarePairedDelimiter{\Floor} {\lfloor}   {\rfloor}
\DeclarePairedDelimiter{\Size}  {\vert}     {\vert}
\DeclarePairedDelimiter{\Paren} {\lparen}   {\rparen}
\DeclarePairedDelimiter{\Brace} {\lbrace}   {\rbrace}
\DeclarePairedDelimiter{\Braket}{\lbrack}   {\rbrack}
\DeclarePairedDelimiter{\Sem}   {\llbracket}{\rrbracket}
\DeclarePairedDelimiter{\LOne}   {\lVert}{\rVert_1}
\DeclarePairedDelimiter{\LInf}   {\lVert}{\rVert_\infty} 
\NewDocumentCommand{\GEz}     {}{{\scalebox{0.5}{${\geq}0$}}}
\NewDocumentCommand{\Nat}     {}{\ensuremath{\mathbb{N}}}
\NewDocumentCommand{\Rat}     {}{\ensuremath{\mathbb{Q}}}
\NewDocumentCommand{\Real}    {}{\ensuremath{\mathbb{R}}}
\NewDocumentCommand{\nnRat}   {}{\ensuremath{\Rat_\GEz}}
\NewDocumentCommand{\nnReal}  {}{\ensuremath{\Real_\GEz}}
\NewDocumentCommand{\iFF} {}{\mbox{iff}}    
\NewDocumentCommand{\ie}  {}{{\em i.e.}}
\NewDocumentCommand{\wLOG}{}{{\em wlog.}}
\NewDocumentCommand{\oftype}  {}{\ensuremath{\in}}
\NewDocumentCommand{\Sat}     {}{\ensuremath{\models}}
\NewDocumentCommand{\nSat}    {}{\ensuremath{\not\models}}
\NewDocumentCommand{\goesto}      {O{}}   {\ensuremath{\xrightarrow{#1}}}
\NewDocumentCommand{\goestoo}     {O{}O{}}{\ensuremath{\underset{#2}{\goesto[#1]}}}
\NewDocumentCommand{\goesboo}     {O{}}   {\ensuremath{\underset{#1}{\goesto}}}
\NewDocumentCommand{\hQ}    {}{\ensuremath{\mathtt{Q}}}
\NewDocumentCommand{\hE}    {}{\ensuremath{\mathtt{E}}}
\NewDocumentCommand{\hS}    {}{\ensuremath{\mathtt{S}}}
\NewDocumentCommand{\hD}    {}{\ensuremath{\mathtt{D}}}
\NewDocumentCommand{\hX}    {}{\ensuremath{\mathtt{X}}}
\NewDocumentCommand{\hI}    {}{\ensuremath{\mathtt{I}}}
\NewDocumentCommand{\hJ}    {}{\ensuremath{\mathtt{J}}}
\NewDocumentCommand{\hA}    {}{\ensuremath{\Sigma}}
\NewDocumentCommand{\hL}    {}{\ensuremath{\mathtt{L}}}
\NewDocumentCommand{\hQi}   {}{\ensuremath{\hQ^\mathsf{init}}}
\NewDocumentCommand{\hQf}   {}{\ensuremath{\hQ^\mathtt{final}}}
\NewDocumentCommand{\bS}   {}{\ensuremath{\mathtt{S}}}
\NewDocumentCommand{\bSi}  {}{\ensuremath{\bS^\mathsf{init}}}
\NewDocumentCommand{\bA}   {}{\ensuremath{\Sigma}}
\NewDocumentCommand{\bT}   {}{\ensuremath{\Gamma}}
\NewDocumentCommand{\bF}   {}{\ensuremath{\mathtt{F}}}
\NewDocumentCommand{\AP}     {}{\ensuremath{\mathtt{AP}}}
\NewDocumentCommand{\Yes}    {}{\ensuremath{\mathtt{yes}}}
\NewDocumentCommand{\No}     {}{\ensuremath{\mathtt{no}}}
\NewDocumentCommand{\Unknown}{}{\ensuremath{\mathtt{unknown}}}
\NewDocumentCommand{\MITL}     {}{MITL}
\NewDocumentCommand{\LTL}      {}{LTL}
\NewDocumentCommand{\PSPACE}          {}{\mbox{\scalebox{0.85}{\textsf{PSPACE}}}}
\NewDocumentCommand{\EXPSPACE}        {}{\mbox{\scalebox{0.85}{\textsf{EXPSPACE}}}}
\NewDocumentCommand{\PSPACEcomp}      {}{\PSPACE-complete}
\NewDocumentCommand{\EXPSPACEcomp}    {}{\EXPSPACE-complete}
\NewDocumentCommand{\Term}{mmm}{%
  \expandafter\DeclareExpandableDocumentCommand\expandafter{\csname #1ls\endcsname}{}{\MakeLowercase{#2}}%
  \expandafter\DeclareExpandableDocumentCommand\expandafter{\csname #1lp\endcsname}{}{\MakeLowercase{#3}}%
  \expandafter\DeclareExpandableDocumentCommand\expandafter{\csname #1cs\endcsname}{}{#2}%
  \expandafter\DeclareExpandableDocumentCommand\expandafter{\csname #1cp\endcsname}{}{#3}%
  \expandafter\DeclareExpandableDocumentCommand\expandafter{\csname #1us\endcsname}{}{\makefirstuc{\MakeLowercase{#2}}}%
  \expandafter\DeclareExpandableDocumentCommand\expandafter{\csname #1up\endcsname}{}{\makefirstuc{\MakeLowercase{#3}}}%
}
\NewDocumentCommand{\STORMED}{}{\mbox{\scalebox{0.9}{STORMED}}}
\NewDocumentCommand{\Buchi}  {}{B\"uchi}
\NewDocumentCommand{\Markov} {}{Markov}
\NewDocumentCommand{\UnaryFunc}{mmo}{\ensuremath{%
  \IfValueTF{#3}%
  {\IfBooleanTF{#1}%
    {#2\Paren*{#3}}%
    {#2\Paren{#3}}%
  }%
  {#2}%
}}
\NewDocumentCommand{\BigO}  {so}{\UnaryFunc{#1}{\mathcal{O}}[#2]}
\NewDocumentCommand{\Inf}   {so}{\UnaryFunc{#1}{\mathtt{inf}}[#2]}
\NewDocumentCommand{\iInf}  {so}{\UnaryFunc{#1}{\mathtt{inf}}[#2]}
\NewDocumentCommand{\iSup}  {so}{\UnaryFunc{#1}{\mathtt{sup}}[#2]}
\NewDocumentCommand{\Lang}  {so}{\UnaryFunc{#1}{\mathtt{Lang}}[#2]}
\NewDocumentCommand{\Func}{mmoO{\Paren*}}
{%
    \IfNoValueTF{#3}%
    {\ensuremath{#1#2}}%
    {\ensuremath{#1#2#4{#3}}}%
}
\NewDocumentCommand{\Finally}   {oO{}}  {\Func{\mathtt{F}}{#2}[#1][]}
\NewDocumentCommand{\Prb}       {oO{}}  {\Func{\mathbb{P}}{#2}[#1][\Braket*]}
\NewDocumentCommand{\Exp}       {oO{}}  {\Func{\mathbb{E}}{#2}[#1]}
\NewDocumentCommand{\Path}      {oO{}}  {\Func{\mathtt{Path}}{#2}[#1]}
\NewDocumentCommand{\IEq}       {oO{}}  {\Func{\mathtt{IEq}}{#2}[#1]}
\NewDocumentCommand{\Vesta}     {oO{}}  {\Func{\mathtt{VeStA}}{#2}[#1]}
\NewDocumentCommand{\Neg}       {oO{}}  {\Func{\mathtt{neg}}{#2}[#1]}
\NewDocumentCommand{\CL}        {oO{}}  {\Func{\mathtt{cl}}{#2}[#1]}
\NewDocumentCommand{\CS}        {oO{}}  {\Func{\mathtt{cs}}{#2}[#1]}
\NewDocumentCommand{\Atoms}     {oO{}}  {\Func{\mathtt{atoms}}{#2}[#1]}
\NewDocumentCommand{\ALG}       {oO{}}  {\Func{\mathcal{A}}{#2}[#1]}
\NewDocumentCommand{\ALGClose}  {oO{}}  {\Func{\mathtt{Close}}{#2}[#1]}
\DeclareExpandableDocumentCommand{\UntilOp}			{}{\ensuremath{\mathcal{U}}}
\DeclareExpandableDocumentCommand{\ReleaseOp}		{}{\ensuremath{\mathcal{R}}}
\DeclareExpandableDocumentCommand{\NextOp}			{}{\ensuremath{\mathcal {X}}}
\DeclareExpandableDocumentCommand{\AlwaysOp}		{}{\ensuremath{\square}}
\DeclareExpandableDocumentCommand{\EventuallyOp}{}{\ensuremath{\lozenge}}
\NewDocumentCommand{\Next}			{mo} {\ensuremath{\NextOp\IfValueT{#2}{_{#2}}#1}}
\NewDocumentCommand{\Always}		{mo} {\ensuremath{\AlwaysOp\IfValueT{#2}{_{#2}}#1}}
\NewDocumentCommand{\Eventually}{mo} {\ensuremath{\EventuallyOp\IfValueT{#2}{_{#2}}#1}}
\RenewDocumentCommand{\Until}		{mmo}{\ensuremath{#1\UntilOp\IfValueT{#3}{_{#3}}#2}}
\NewDocumentCommand{\Release}		{mmo}{\ensuremath{#1\ReleaseOp\IfValueT{#3}{_{#3}}#2}}
\NewDocumentCommand{\mR}    {}{\ensuremath{{r}}}
\NewDocumentCommand{\mY}    {}{\ensuremath{{p}}}
\NewDocumentCommand{\mC}    {}{\ensuremath{{c}}}
\NewDocumentCommand{\Restriction}{o}
{%
    \IfNoValueTF{#1}%
    {\textcolor{red}{(restriction)}}%
    {\textcolor{red}{(restriction: #1)}}%
}
\newcommand{\mc}[1]{\ensuremath{\mathcal{#1}}}
\newcommand{\mbf}[1]{\ensuremath{\mathbf{#1}}}
\renewcommand{\d}{\ensuremath{\mathrm{d}}}
\NewDocumentCommand{\od}{omm}{\ensuremath{%
\IfNoValueTF{#1}{\frac{\mathrm{d} #2}{\mathrm{d} #3}}{\frac{\mathrm{d}^{#1} #2}{\mathrm{d} {#3}^{#1}}}}}
\newcommand{\real}{\ensuremath{\mathbb{R}}}
\newcommand{\uni}{\ensuremath{\mathbf{U}}}
\renewcommand{\subset}{\subseteq}
\renewcommand{\phi}{\varphi}
\newcommand{\paren}[1]{\ensuremath{\left( #1 \right) }}
\newcommand{\set}[2]{\ensuremath{\left\{ #1 \mid #2 \right\}}}
\newcommand{\abs}[1]{\ensuremath{\vert #1 \vert}}
\newcommand{\nm}[1]{\ensuremath{\Vert #1 \Vert}}
\newcommand{\tv}[1]{\nm{#1}_{\text{TV}}}
\newcommand{\ner}{\Delta}
\newcommand{\inv}[1]{{#1}^{\rm inv}}
\renewcommand{\d}{\ensuremath{\mathrm{d}}}
\begin{document}

\title{Verifying Stochastic Hybrid Systems with Temporal Logic Specifications via Model Reduction}

\author{Yu Wang}
\orcid{0000-0002-0431-1039}
\affiliation{%
  \institution{Duke University}
  \streetaddress{100 Science Dr}
  \city{Durham}
  \state{NC}
  \postcode{27708}
}
\email{yu.wang094@duke.edu}

\author{Nima Roohi}
\affiliation{%
  \institution{University of California San Diego}
  \streetaddress{9500 Gilman Dr}
  \city{La Jolla}
  \state{CA}
  \postcode{92093}
}
\email{nroohi@ucsd.edu}

\author{Matthew West}
\affiliation{%
  \institution{University of Illinois at Urbana-Champaign}
  \streetaddress{1206 W Green St}
  \city{Urbana}
  \state{IL}
  \postcode{61801}
}
\email{mwest@illinois.edu}

\author{Mahesh Viswanathan}
\affiliation{%
  \institution{University of Illinois at Urbana-Champaign}
  \streetaddress{201 Goodwin Ave}
  \city{Urbana}
  \state{IL}
  \postcode{61801}
}
\email{vmahesh@illinois.edu}

\author{Geir E. Dullerud}
\affiliation{%
  \institution{University of Illinois at Urbana-Champaign}
  \streetaddress{1308 W Main St}
  \city{Urbana}
  \state{IL}
  \postcode{61801}
}
\email{dullerud@illinois.edu}

\begin{abstract}
We present a scalable methodology to verify stochastic hybrid systems. Using the Mori-Zwanzig reduction method, we construct a finite state Markov chain reduction of a given stochastic hybrid system and prove that this reduced Markov chain is approximately equivalent to the original system in a distributional sense. Approximate equivalence of the stochastic hybrid system and its Markov chain reduction means that analyzing the Markov chain with respect to a suitably strengthened property, allows us to conclude whether the original stochastic hybrid system meets its temporal logic specifications. We present the first statistical model checking algorithms to verify stochastic hybrid systems against correctness properties, expressed in the linear inequality linear temporal logic (iLTL) or the metric interval temporal logic (MITL).
\end{abstract}

\begin{CCSXML}
<ccs2012>
<concept>
<concept_id>10010520.10010553</concept_id>
<concept_desc>Computer systems organization~Embedded and cyber-physical systems</concept_desc>
<concept_significance>500</concept_significance>
</concept>
<concept>
<concept_id>10002978.10002986</concept_id>
<concept_desc>Security and privacy~Formal methods and theory of security</concept_desc>
<concept_significance>300</concept_significance>
</concept>
<concept>
<concept_id>10003752.10003790.10011119</concept_id>
<concept_desc>Theory of computation~Abstraction</concept_desc>
<concept_significance>300</concept_significance>
</concept>
</ccs2012>
\end{CCSXML}

\ccsdesc[500]{Computer systems organization~Embedded and cyber-physical systems}
\ccsdesc[300]{Security and privacy~Formal methods and theory of security}
\ccsdesc[300]{Theory of computation~Abstraction}

\keywords{Cyber-physical systems, statistical model checking, approximate bisimulation.}

\maketitle

\section{Introduction} \label{sec:intro}

Stochastic hybrid systems, modeling discrete, continuous, and stochastic behavior, arise in many real-world applications ranging from automobiles~\cite{jin2014benchmarks}, smart grids~\cite{daniele2017smart}, and
biology~\cite{rajkumar2010cyber,liu_probabilistic_2011,liu_approximate_2012,zuliani_statistical_2014,gyori2015approximate}. 
In these contexts, it is often useful to determine if the models meet their time-dependent design goals. However, the verification problem is computationally very challenging --- even for systems with very simple dynamics that exhibit no stochasticity, and for the most basic class of safety properties, namely invariants, the problem of determining if a system meets its safety goals is undecidable~\cite{decidable-hybrid98}. 
The difficulty of the verification problem largely arises from the fact that the state space of such systems has uncountably many states.

The computational challenge posed by the verification problem is often addressed by constructing a simpler finite state model of the system, and then analyzing the finite state model. 
The finite state model is typically an \emph{abstraction} or a conservative over-approximation of the original system, \ie, every behavior of the system is exhibited by the finite state model, but the finite state model may have additional behaviors that are not system behaviors. 
This approach has been used to verify~\cite{efhkost03-2,adi03,rpv17} and design controllers~\cite{tabuada_linear_2006,kloetzer_fully_2008,wongpiromsarn_receding_2010,liu2013synthesis} for non-stochastic systems, as well as to verify~\cite{cv09-2,liu_probabilistic_2011,liu_approximate_2012,tkachev2013formula,gyori2015approximate} and design controllers~\cite{tkachev2013quantitative} for stochastic hybrid systems. 
For such abstractions, if the finite state model is safe then so is the original system. However, if the finite state model is unsafe, then not much can be concluded about the safety of the original system because the finite state model is an over-approximation.

In this paper, we present a scalable approach to the verification of a class of specifications defined by iLTL or MITL~\cite{04-iLTL,96-MITL} for stochastic hybrid systems, based on constructing a finite state approximation that is ``equivalent'' to the original system.
These specifications reason over the evolution of the probability distributions of the systems, and can express a wide class of safety properties.

To verify these specifications, we construct an approximate bi-simulation between stochastic hybrid systems and finite state Markov chains using the Mori-Zwanzig model reduction method~\cite{chorin_optimal_2000,beck_model_2009}.
The advantage of bi-simulation is that analyzing the finite state model not only allows us to conclude the safety of the hybrid stochastic system, but also its non-safety. 
In order to explain the relationship between the Markov chain we construct and the stochastic hybrid system, it is useful to recall that there are two broad approaches to defining the semantics of a stochastic process. 
One approach is to view a stochastic system as defining a measure space on the collection of executions; by execution here we mean a sequence of states that the system may possibly go through. 
The other approach is to view the stochastic system as defining a transformation on distributions; in such a view, the behavior of the stochastic model is captured by a sequence of distributions, starting from some initial distribution. 
For the first semantics (of measures on executions), it has been shown that approximate abstractions can be build 
between finite state Markov chains and certain classes of stochastic hybrid systems~\cite{julius_ApproximationsStochasticHybrid_2009,abate_ApproximateModelChecking_2010a,abate_ApproximateAbstractionsStochastic_2011}.
However, it has been observed that constructing an approximate ``equivalence'' between Markov chains and infinite-state systems is very challenging in general~\cite{gyori2015approximate}.

In this paper, we in contrast show that the Mori-Zwanzig reduction method constructs a finite state Markov chain that is approximately equivalent to a stochastic hybrid system with respect to the second semantics. 
That is, we show that the distribution on states of the Markov chain at any time, is close to the distribution at the same time defined by the stochastic hybrid system (Theorem~\ref{thm:reduction error}), even though there might be no (approximate) probabilistic path-to-path correspondence between the path space of the stochastic hybrid system and that of the Markov chain, as it is required under the first semantics.

Similar to~\cite{abate_ApproximateModelChecking_2010a,abate_ApproximateAbstractionsStochastic_2011}, the Mori-Zwanzig reduction is performed via partitioning the state space, although the metric for ``equivalence'' is different.
The approximate equivalence by Mori-Zwanzig reduction can be seen to be similar in spirit to the results first established for non-stochastic, stable, hybrid systems~\cite{tabuada_linear_2006,Pola20082508,girard2010approximately}, and later extended to stochastic dynamical systems~\cite{zamani2012symbolic,zamani2014symbolic}. 
When compared to~\cite{zamani2012symbolic,zamani2014symbolic}, we consider a more general class of stochastic hybrid systems that have multiple modes and jumps with guards and resets. 
Second, our reduced system is a Markov chain, whereas in~\cite{zamani2012symbolic,zamani2014symbolic} the stochastic system is approximated by a finite state, non-stochastic model.
In addition, our notion of distance between the stochastic hybrid system and the reduced system is slightly different.

Having proved that our reduced Markov model is approximately equivalent to the original stochastic hybrid system, we can exploit this to verify stochastic hybrid systems. 
Approximate equivalence ensures that analyzing the reduced model with respect to a suitably strengthened property, allows us to determine whether the initial stochastic hybrid system meets or violates its requirements. 
Therefore, a scalable verification approach can be obtained by developing algorithms to verify finite state Markov
chains. 
Since the reduced system, even though finite state, is likely to have a large number of states, we use a statistical approach to verification~\cite{younes_statistical_2006} as opposed to a symbolic one.

In statistical model checking, the model being verified is simulated multiple times, and the drawn simulations are analyzed to see if they constitute a statistical evidence for the correctness of the model. 
Statistical model checking algorithms have been developed for logics that reason about measures of executions~\cite{younes_statistical_2006,sen_statistical_2005,yesno-to-yesnounknown-2006-Younes,zuliani_statistical_2014}. 
However, since our reduced Markov chain is only close to the stochastic hybrid system in a distributional sense, we cannot leverage these algorithms. 
Instead, we develop new statistical model checking algorithms for temporal logics (over discrete and continuous time) that reason about sequences of distributions.

The scalability of our approach depends critically on the way the partition-based Mori-Zwanzig model reduction is performed, as it involves numerical integrations on the partitions. 
For stochastic hybrid systems with nonlinear but polynomial dynamics for the continuous part, the curse of dimensionality for direct numerical integration can be avoided as explicit symbolic solutions for the numerical solution exists. 
This is demonstrated in \cref{sec:case} by a case study.
Also, Monte-Carlo integrations can be adopted for more general dynamics with considerations on extra statistical errors.
Finally, we note that using this approach, we were the first to successfully
verify~\cite{rwwdv17} a highly non-linear model including lookup tables
of a powertrain control system that was proposed as a challenging problem
for verification tools by Toyota engineers~\cite{jin2014benchmarks}.

This paper is based on three of our previous papers~\cite{wang2015statistical,Wang2015267,wang2016verifying}, where discrete-time stochastic hybrid systems are studied in~\cite{wang2015statistical}; continuous-time stochastic (non-hybrid) systems are studied in~\cite{Wang2015267}; and continuous-time stochastic hybrid systems are studied in~\cite{wang2016verifying} without numerical evaluations.
This work presents a unification for the statistical verification of continuous-time and discrete-time stochastic hybrid systems, and provides a case study to numerically demonstrate the scalability of our statistical verification algorithms.

The rest of the article is organized as follows.
In \cref{sec:problem}, we introduce the general setup of the problem, including the definition of continuous-time stochastic hybrid systems and the syntax and semantics of metric interval temporal logic.
In \cref{sec:modelreduction}, we use the Mori-Zwanzig method to reduce the hybrid system to a Markov chain and prove that the temporal logic formulas on the hybrid system can be verified by checking slightly stronger formulas on the Markov chain.
In \cref{sec:mc}, we develop a statistical model checking algorithm for actually carrying out the verification.
In \cref{sec:model_reduction_of_discrete_hybrid_systems}, we consider discrete-time stochastic hybrid systems and derive similar model reduction and model checking results in this setting.
The scalability of the proposed algorithms is demonstrated by a case study in \cref{sec:case}.
Finally, we conclude in \cref{sec:conclusion}.

\section{Problem Formulation} \label{sec:problem}

We denote the set of natural, rational, non-negative rational, real, positive real, and non-negative real numbers by $\mathbb N$, $\Rat$, $\nnRat$, $\mathbb R$, $\mathbb R_{>0}$ and $\nnReal$ respectively.
We denote the essential supremum by ${\rm ess \ sup}$.
For $n \in \mathbb N$, let $[n] = \{1,2,\ldots,n\}$.
For any set $\mathbb{S}$, let $\mathbb{S}^{\pmb \omega}$ be the set of infinite sequences in $\mathbb{S}$. For $s \in \mathbb{S}^{\pmb \omega}$, let $s_i$ be the $i^{\rm th}$ element in the sequence.
For a finite set $A$, we denote the cardinality by $\Size{A}$ and its power set by $2^A$.
The empty set is denoted by $\emptyset$.
For $X \subset \mathbb R^d$, we denote the boundary of $X$ by $\partial X$.
The symbols $\mathbb P$ and $\mathbb E$ are used for the probability and the expected value, respectively.

\subsection{Stochastic Hybrid System} \label{sub:ctshs}

In this work, we follow the formal definitions of continuous-time stochastic hybrid systems in~\cite{Teel20142435,teel2015stochastic,Teel2017,Subbaraman2017} as shown in~\cref{fig:shs}.
However, we focus on a Fokker-Planck formulation and interpretation of the model.

\subsubsection{Continuous-time Stochastic Hybrid System}
\label{ssub:ctshs}
We denote the continuous and discrete states by $x \in \mathbb R^d$ and $q \in \mathcal{Q}$ respectively, where $\mathcal{Q} = \{q_1, \ldots, q_m\}$ is a finite set.
We call the combination $(q, x)$ the state of the system, and the product set $\mathbb{X} \subseteq \mathcal{Q} \times \mathbb R^d$ the state space.
For each $q \in \mathcal{Q}$, the state of the system flows in $\mathbf{A}_q\subseteq\Real^d$ and jumps forcedly on hitting the boundary $\mathbf{A}_q$.
We assume that each $\mathbf{A}_q$ is open and bounded, and the boundaries $\partial \mathbf{A}_q$ are second-order continuously differentiable.
On the flow set, the state $x$ of the system evolves by a stochastic differential equation
\begin{equation} \label{eq:continuous dynamics}
  \d x = f(q, \mathbf{x}) \d t + g(q, \mathbf{x}) \d B_t,
\end{equation}
where $q$ and $\mathbf{x}$ are random processes describing the stochastic evolution of the discrete and continuous states, and $B_t$ is the standard $n$-dimensional Brownian motion.
The vector-valued function $f$ specifies the drift of the state, and the matrix-valued function $g$ describes the intensity of the diffusion~\cite{karatzas2012brownian,revuz2013continuous}.
In~\eqref{eq:continuous dynamics}, we assume that $f(q,\cdot)$ and $g(q,\cdot)$ are locally Lipschitz continuous.
Meanwhile, the system jumps spontaneously by a non-negative integrable rate function $r (q, x)$ inside $\mathbf{A}_q$.
The probability distribution of the target of both spontaneous and forced jumps (as they happened on different domains) is given by a non-negative integrable target distribution $h (q', x', q, x)$, satisfying
\begin{equation}
    \sum_{q \in \mathcal{Q}} \int_{\mathbf{A}_q} h(q',x', q, x) \d x' = 1.
\end{equation}

\begin{figure}[t]
\centering
\includegraphics[width=0.5\textwidth]{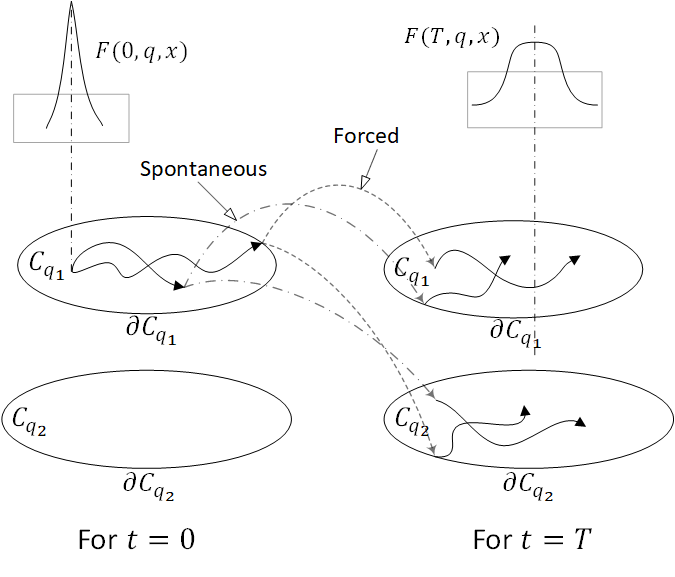}
\caption{A continuous-time stochastic hybrid system with two discrete states at time $0$ and $T$.} \label{fig:shs}
\end{figure}

\subsubsection{Fokker-Planck Equation} \label{ssub:fp_equations}
The probability distribution $F(t,q,x)$ of the state of the system in the flow set is determined by the Fokker-Planck equation, which can be derived in the same way as that for jump-diffusion processes~\cite{hanson_AppliedStochasticProcesses_2007},
\begin{equation} \label{eq:fp}
\begin{split}
  & \frac{\partial F(t,q,x)}{\partial t} =  L (F(t,q,x)) = \underbrace{- \sum_{a=1}^d \frac{\partial}{\partial x_a} (f_a(q,x) F(t,q,x))}_{\rm drift}
  \underbrace{+ \sum_{a=1}^d \sum_{b=1}^d \frac{\partial^2}{\partial x_a \partial x_b} \sum_{c=1}^d \frac{g_{ac}(q,x) g_{cb}(q,x) F(t,q,x)}{2}}_{\rm diffusion}
  \\ & \quad \underbrace{- r(q,x) F(t,q,x)}_\textrm{jump out}
  \underbrace{+ \sum_{q \in \mathcal{Q}} \int_{x \in \mathbb{A_q}} h(q, x, q', x') r(q',x') F(t, q', x') \d x'}_\textrm{spontaneous jump in},
  \underbrace{+ \sum_{q \in \mathcal{Q}} \int_{x \in \partial \mathbb{A_q}} h(q, x, q', x') (\mathbf{n} \cdot \mathbf{F}) \d x'}_\textrm{forced jump in},
\end{split}
\end{equation}
where $f_a$ is the $a$th element of $f$ from~\eqref{eq:continuous dynamics}, $\mathbf{n}$ is the unit vector pointing out of the flow set and the inner product $\mathbf{n} \cdot \mathbf{F}$ is the corresponding outgoing flow.
Here, $\mathbf{F}$ is a matrix (more precisely a second-order tensor) whose components are given by
\begin{equation}
  \mathbf{F}_{ab} = \frac{\partial}{\partial x_b} \sum_{c=1}^d \frac{g_{ac}(q,x) g_{cb}(q,x) F(t,q,x)}{2},
\end{equation}
where $a, b\in [d]$.
In \eqref{eq:fp}, $L$ is the Fokker-Planck operator for the system, and we write symbolically that $F(t,q,x) = e^{tL} F(0,q,x)$.
On the boundary, we have 
\begin{equation}
  F(t, q', x') = 0
\end{equation}
as it is absorbing (paths jump away immediately after hitting the boundary).
In the rest of the paper, we assume that the stochastic hybrid system given in this section is well defined in the sense that it gives a Fokker-Planck equation with a unique solution~\cite{karatzas2012brownian,revuz2013continuous}.

\subsubsection{Invariant Distribution} \label{ssub:invariant distribution}
An invariant distribution of the continuous-time stochastic hybrid system $F_{\rm inv}(q,x)$ is defined by
\begin{equation}
  L(F_{\rm inv}(q,x)) = 0.
\end{equation}
In this work, when handling temporal logic specifications of an infinite time horizon, we assume that $F(t,q,x)$ converges to the invariant distribution function $F_{\rm inv}(q,x)$ to ensure that the truth value of the specifications will not change after a finite time.

\subsubsection{System Observables} \label{ssub:system observables}
The state of the system is only partially observable. Here, we are interested in observables of the system given by
\begin{equation} \label{eq:obs}
  y(t) = \mathbb E [y(q(t),x(t))]
    = \sum_{q \in \mathcal{Q}} \int_{\mathbf{A}_q} \gamma(q,x) F(t,q,x) \d x,
\end{equation}
where $\gamma(q,x)$ is a weight function on $\mathbb X$, which is integrable in $x$ for each $q \in \mathcal Q$.

\begin{example} \label{ex:dynamics}
Throughout the paper, we use the following example to illustrate the theorems.
Consider a continuous-time stochastic hybrid system with two discrete states on $\mathbb{X} = \{1\} \times [0,1] \cup \{2\} \times [2,4]$.
It jumps uniformly to $[2,4]$ when hitting $x=0$ or $x=1$, and jumps uniformly to $[0, 1]$ when hitting $x=2$ or $x=4$.
It can jump spontaneously at any $x \in \mathbb{X}$ with the rate $h(x) = \mathbf{I}_\mathbb{X} (x) / 3$, where $\mathbf{I}_\mathbb{X} (\cdot)$ is the indicator function of the set $\mathbb{X}$.
In each location, the state of the system is governed by the stochastic differential equation
\begin{equation*}
  \d \mathbf{x} = \d t + \d B_t,
\end{equation*}
The probability distribution $F(t,q,x)$ of the state evolves by the Fokker-Planck equation
\begin{equation*}
\begin{split}
  & \frac{\partial F(t,q,x)}{\partial t} = - \frac{\partial F(t,q,x)}{\partial x} + \frac{1}{2}
  \frac{\partial^2 F(t,q,x)}{\partial x^2}
   + \frac{\partial F(t,q,0)}{\partial x} 
   - \frac{\partial F(t,q,1)}{\partial x} + \frac{1}{2} \frac{\partial F(t,q,2)}{\partial x} - \frac{1}{2} \frac{\partial F(t,q,4)}{\partial x}
\end{split}
\end{equation*}
with the boundary conditions
\begin{equation*}
  F(t,q,0) = F(t,q,1) = F(t,q,2) = F(t,q,4) = 0.
\end{equation*}
Initially, the state of the system is uniformly distributed on $[0, 1/2]$.
\end{example}

\subsection{Metric Interval Temporal Logic} \label{sub:mitl}

We are interested in verifying temporal properties 
of the continuous-time stochastic hybrid systems.
These properties are specified as follows.
The atomic propositions $\AP$ are inequalities 
of the form $y\sim c$ ($c\in\Rat$, $\sim\in\Brace{<,\leq,\geq,>}$),
where $y$ is an observable of the system given by \eqref{eq:obs};
and these atomic propositions are concatenated 
by the syntax of Metric Interval Temporal Logic (MITL)~\cite{96-MITL}.
This type of logic is also referred to as 
Signal Temporal Logic (STL)~\cite{04-STL,15-monSTL,13-monSTL} in the literature.
The syntax of MITL is given in~\cref{def:MITL-syn}.

\begin{definition}[MITL Syntax] \label{def:MITL-syn}
An MITL formula is defined using the following BNF form:
\[
\phi \Coloneqq
\bot                         \mid
\top                         \mid
y \sim c                          \mid
\phi\land\phi                \mid
\phi\lor \phi                \mid
\Until{\phi}{\phi}[I]   \mid
\Release{\phi}{\phi}[I],
\]
where $c\in\Rat$, $\sim\in\Brace{<,\leq,\geq,>}$ 
and $I$ is a non-singleton interval on $\nnReal$.
\end{definition}

We note that the syntax does not contain negation ($\neg$), since $\Brace{<,\leq,\geq,>}$ is closed under negation.
For a standard MITL formula, negation on non-atomic formulas can always be pushed inside as part of the atomic propositions. For example,
    $\neg\Paren{y > 0}$ is defined as $y \leq 0$,
    $\neg\Paren{\phi_1\vee\phi_2}$  is defined as $\Paren{\neg\phi_1} \land \Paren{\neg\phi_2}$, and
    $\neg\Paren{\Until{\phi}{\psi}[I]}$ is defined as $\Release{\Paren{\neg\phi}}{\Paren{\neg\psi}}[I]$.

The continuous-time stochastic hybrid system induces a signal $f(t): \nnReal \rightarrow 2^\AP$ by $(y \sim c) \in f(t)$ iff $y \sim c$ holds at time $t$.
The semantics of MITL are defined with respect to the signal $f(t)$ as follows.
  
\begin{definition}[MITL Semantics]\label{def:MITL-sem}
Let $\phi$ be an MITL formula and $f$ be a signal $f:\nnReal \rightarrow 2^\AP$. The satisfaction relation $\Sat$ between $f$ and $\phi$ is
defined according to the following inductive rules:
\[
\begin{array}{@{}l@{\ }c@{\ }l@{}}
f \models \bot & {\rm iff} & \mbox{false}
\\ f \models \top & {\rm iff} & \mbox{true}
\\ f \models y \sim c & {\rm iff} & \Paren{y \sim c} \in f(0)
\\ f \models \phi \land \psi & {\rm iff} & \Paren{f \models \phi} \land \Paren{f \models \psi}
\\ f \models \phi \lor \psi & {\rm iff} & \Paren{f \models \phi} \lor \Paren{f \models \psi}
\\ f \models \Until{\phi}{\psi}[I] & {\rm iff} & \exists t \in I, \Paren{f^t \models \psi}
\land \forall t' \in (0,t), f^{t'} \models \phi
\\ f \models \Release{\phi}{\psi}[I] & {\rm iff} & \forall t \in I, \Paren{f^t \models \psi} 
\textrm{ or } \exists t\in\mathbb R_{>0}, (f^t \models \phi \land \forall t'\in[0, t] \cap I, f^{t'} \models  \psi) \textrm{ or }
\\ & &
\exists t \in I', t' \in I \cap (t,\infty),
\forall t''\in I, (  t'' \leq t \to f^{t''} \models \psi)\land
(t<t'' \leq t'\to f^{t''} \models \phi)
\end{array}
\]
where $f^r (\cdot) = f(r + \cdot)$ and 
$I' = I \cup \{\underline{I}\}$ in the semantics of $\Release{\phi}{\psi}[I]$ with $\underline{I}$ being the lower bound of $I$. We define $\llbracket  \phi  \rrbracket $ to be the set of signals that satisfy $\phi$.
\end{definition}

Our semantics of $\Release{}{}$ in MITL is more complicated than 
the semantics of $\Release{}{}$ in LTL, its discrete-time counterpart.
Our semantics of $\Release{}{}$ is also different from 
the common semantics of MITL~\cite{96-MITL}.
This is because it has recently been shown that 
the common semantics of MITL cannot ensure that 
the formulas $\neg(\Until{\phi}{\psi}[I])$ 
and $\Release{(\neg\phi)}{(\neg\psi)}[I]$ are equivalent 
for the continuous-time domain
(see~\cite{roohi2018revisiting} for details).
Following the semantics of MITL, the satisfiability/model checking problems 
for MITL with \textit{abstract} atomic propositions 
are known to be \EXPSPACEcomp~\cite{96-MITL,roohi2018revisiting}.
The corresponding decision procedure has a close connection with timed automata.

\begin{definition}[\TAcp~\cite{94-TA}]\label{def:TA}
\TAus\ $A$ is a tuple
  $(\hQ, \hX, \hA, \hL, \hI, \hE, \hQi, \hQf)$ where
\begin{compactitem}
\item $\hQ$ is a finite non-empty set of {\em locations}.
\item $\hX$ is a finite set of {\em clocks}.
\item $\hA$ is a finite {\em alphabet}.
\item $\hL: \hQ \rightarrow \hA$ maps each location to the {\em label} of that location.
\item $\hI: \hQ \rightarrow (\hX \rightarrow {\mathbb I}_{\geq 0})$ maps each location to its {\em invariant} which is the set of possible values of variables in that location, {where ${\mathbb I}_{\geq 0}$ is the set of intervals on $\nnReal$}.
\item $\hE \subseteq \hQ\times\hQ\times 2^\hX$ is a finite set of {\em edges} of the form {$e = (s,d,j)$}, where $s = \hS e$ is {\em source} of the edge; $d = \hD e$ is {\em destination} of the edge; and $j = \hJ e$ is the set of clocks that are {\em reset} by the edge.
\item $\hQi \subseteq \hQ$ is the set of {\em initial locations}.
\item $\hQf \subseteq \hQ$ is the set of {\em final locations}.
\end{compactitem}
\end{definition}

A {\em run} of the \TAls\ $A$ is a sequence of tuples $\Paren{\rho, \tau, \eta} \in \hQ^\omega \times {\mathbb I}^\omega_{\geq 0} \times \hE^\omega$ in which the following conditions holds:
\begin{inparaenum}[(i)]
\item $\rho_0\in\hQi$, \ie, $\rho$ starts from an initial location $\hQi$;
\item $(\hS\eta_n=\rho_n)  \land (\hD\eta_n=\rho_{n+1})$, \ie, the source and destination of edge $\eta_n$ is $\rho_n$ and $\rho_{n+1}$, respectively;
\item $\tau_0, \tau_1, \ldots$ is an ordered and disjoint partition of the time horizon $\nnReal$; and
\item $\forall t \in \tau_n, x \in \hX$, we have $\varrho_n(x)+t- \underline \tau_n \in\hI(\rho_n,x)$, where $\varrho_{0}(x)=0$ and $\varrho_{n+1}(x)$ is inductively defined by
\begin{gather*}
\varrho_{n+1}(x)=
\begin{cases}
    0, &\text{ if }x\in\hJ\eta_n\\
    \varrho_n(x)+ \overline \tau_n- \underline \tau_n , &\text{ otherwise}
\end{cases}
\end{gather*}
\ie, clocks must satisfy the invariant of the current location.
\end{inparaenum}
Here, $\underline \tau$ and $\overline \tau$ are the lower and upper bound of the interval.

A run satisfying the condition $\Inf[\rho]\cap\hQf\neq\emptyset$, \ie, some location from $\hQf$ has been visited infinitely many times by $\rho$, is called an {\em accepting run} of $A$.
Note that every run of $A$ induces a function $f$ of type $\nnReal \rightarrow \hA$ that maps $t$ to $\hL(\rho_n)$, where $n$ is uniquely determined by the condition $t\in\tau_n$.
We define the {\em language} of $A$, denoted by $\Lang[A]$, to be the set of all functions that are induced by accepting runs of $A$.
The language of timed automata is closely related to MITL as follows.

\begin{lemma}[MITL to Timed Automata~\cite{96-MITL}]\label{thm:mitl2ta}
For any \MITL\ formula $\phi$, a \TAls\ $A_\phi$ can be constructed such that $\Lang[A_\phi] = \llbracket  \phi  \rrbracket $, \ie, the set of functions
that satisfy $\phi$ is exactly those that are induced by accepting runs of $A_\phi$.
\end{lemma}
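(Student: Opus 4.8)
The plan is to follow the classical automata-theoretic construction for MITL, due to Alur, Feder, and Henzinger, adapting it to the release-based, negation-free syntax and the run-based timed-automata semantics fixed in this paper. I would proceed by structural induction on the MITL formula $\phi$, building $A_\phi$ compositionally so that $\Lang[A_\phi]$ coincides exactly with $\llbracket\phi\rrbracket$. For the base cases $\bot$, $\top$, and $y\sim c$, the automaton has a constant set of locations labelled by those subsets of $\AP$ consistent with the atomic constraint at time $0$ (and by all subsets thereafter), with a single clock or none, so that the induced signal's value at time $0$ decides membership; these automata are immediate. For the Boolean cases $\phi_1\land\phi_2$ and $\phi_1\lor\phi_2$, I would take the synchronous product of $A_{\phi_1}$ and $A_{\phi_2}$, with acceptance a generalized \Buchi\ condition (one copy of $\hQf$ per conjunct) for $\land$, and the disjoint-union/interleaving construction for $\lor$; one then has to argue the product construction (and the standard degeneralization of a generalized \Buchi\ condition to the single $\hQf$ required by \cref{def:TA}) preserves the timed language, using that $\tau$ is an ordered disjoint partition of $\nnReal$ so the two component runs read the same signal.

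The substantial work is in the temporal operators $\Until{\phi}{\psi}[I]$ and $\Release{\phi}{\psi}[I]$. For these I would introduce, for each temporal subformula, a bounded family of clocks — one per ``obligation'' that the time interval $I$ is open/closed — that are reset whenever a new obligation is generated and whose invariants enforce that the obligation is discharged within $I$; this is the standard bound on the number of concurrently active clocks, which is finite because $I$ has finite endpoints and is non-singleton (so obligations generated within a bounded window can be batched). The automaton guesses, on entering each region of the partition $\tau_n$, whether the eventuality is met now, will be met later within the deadline, or — for $\Release{}{}$ — whether we are still in the ``$\psi$ holds throughout'' phase; the product with $A_\phi$ and $A_\psi$ checks the guesses, and the \Buchi\ acceptance set forces that no obligation is postponed forever. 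The delicate point is that the semantics of $\Release{}{}$ in \cref{def:MITL-sem} is deliberately non-standard (it has the extra disjuncts involving $I' = I\cup\{\underline I\}$ and the interleaved-witness clause), so the automaton's transition structure for the release case cannot be copied verbatim from \cite{96-MITL}; I would verify by hand that the guessing gadget exactly realizes those three disjuncts, in particular that including the lower endpoint $\underline I$ as an admissible witness time corresponds to allowing a clock to fire at value $\underline\tau_n$ at the instant a region boundary is crossed.

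The main obstacle is therefore twofold: first, controlling the number of clocks so that the construction stays finite (the usual MITL argument, but it must be re-checked against the run-based semantics here, where time is partitioned into intervals $\tau_n$ rather than indexed by a timed word), and second, establishing the exact-language equality $\Lang[A_\phi]=\llbracket\phi\rrbracket$ for $\Release{}{}$ under the paper's corrected semantics — the soundness direction (every accepting run induces a satisfying signal) and completeness direction (every satisfying signal admits an accepting run, i.e. the guesses can always be made consistently) both require care precisely at the non-standard disjuncts. Once the inductive step for the temporal operators is in place, the theorem follows by assembling the components; since this lemma is cited from \cite{96-MITL} (with the semantic correction tracked through \cite{roohi2018revisiting}), I would present the construction at the level of the clock/obligation gadget and the acceptance condition, and defer the routine product- and degeneralization-bookkeeping to a reference.
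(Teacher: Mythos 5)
The paper does not actually prove \cref{thm:mitl2ta}: it is imported as a known result from \cite{96-MITL}, with the corrected $\ReleaseOp$-semantics accounted for via \cite{roohi2018revisiting}, so there is no in-paper argument to compare against, and your sketch follows essentially the same classical Alur--Feder--Henzinger obligation-clock construction that those citations contain. One minor inaccuracy: finiteness of the clock set should not be argued from ``$I$ has finite endpoints,'' since \cref{def:MITL-syn} permits unbounded intervals; the standard bound relies only on non-singularity (unbounded intervals are the easy, essentially untimed case), so this is a patchable slip rather than a flaw in the approach.
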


\begin{example} \label{ex:specification}
Following Example~\ref{ex:dynamics}, we want to check the following MITL formula
\begin{align*}
    \phi_1 = \top\ \mc U \left( y_2(t) > \frac{1}{4} \right),
    \quad \phi_2 = \left( y_1(t) > \frac{1}{2} \right) \mc U \left( y_2(t) > \frac{1}{4} \right),
\end{align*}
where
\begin{align*}
    y_1(t) = \sum_{q \in \mathcal{Q}} \int_{\mathbf{A}_q} I_{[0,1]} F(t,q,x) \mathrm d x,
    \quad y_2(t) = \sum_{q \in \mathcal{Q}} \int_{\mathbf{A}_q} I_{[2,4]} F(t,q,x) \mathrm d x.
\end{align*}
\end{example}

\section{Model Reduction of Continuous-time Hybrid Systems} 
\label{sec:modelreduction}

The model reduction procedure for continuous-time stochastic hybrid system 
follows the three steps:
(i) reduce the dynamics by partitioning the state space; 
(ii) reduce the temporal logic specifications accordingly; and 
(iii) estimate the model reduction error.

\subsection{Reducing the Dynamics} \label{subsec:reduce_dynamics}

To implement the Mori-Zwanzig model reduction method 
\cite{chorin_optimal_2000} 
for continuous-time stochastic hybrid systems, 
we partition the continuous state space into finitely many partitions 
$\mathbb{S} = \{s_1,\ldots,s_n\}$, and treat each of them as a discrete state.
The idea of partitioning is similar to 
\cite{abate_ApproximateModelChecking_2010a,abate_ApproximateAbstractionsStochastic_2011}
for the discrete-time stochastic hybrid systems.
The partition is called an equipartition if they are hypercubes with the same size $\eta$.
We assume that for each $s_i$, there exists $q \in \mathcal{Q}$ such that $s_i \subset \{q\} \times \mathbf{A}_q$, and denote its measure by $\mu(s_i)$.
Let $m(\mathbb{X})$ and $m(\mathbb{S})$ be sets of probability distribution functions on $\mathbb{X}$ and $\mathbb{S}$, respectively.
Then we can define a projection $P: m(\mathbb{X}) \rightarrow m(\mathbb{S})$ and an injection $R: m(\mathbb{S}) \rightarrow m(\mathbb{X})$ between $m(\mathbb{X})$ and $m(\mathbb{S})$ by
\begin{equation} \label{eq:projection}
  p_j = (P F(q,x))_j = \int_{s_j} F(q,x) \d x,
\end{equation}
where $p_j$ is the $j$th element of $p$, and
\begin{equation} \label{eq:injection1}
  R p = \sum_{j=1}^{n} p_j \uni_{s_j},
\end{equation}
where $\uni_{s_j}$ is the uniform distribution on $s_j$:
\begin{equation} \label{eq:injection2}
  \uni_{s_j}(x) =
  \begin{cases}
    \frac{1}{\mu(s_j)}, &\text{ if } x \in s_j \\
    0, &\text{ otherwise.}
  \end{cases}
\end{equation}

Here the projection $P$ and the injection $R$ are defined for probability distributions.
But they extend naturally to $L_1$ functions on $\mathbb{X}$ and $\mathbb{S}$ respectively.
The projection $P$ is the left inverse of the injection $R$ but not \emph{vice versa}, namely $PR = I$ but $RP \neq I$.


This projection $P$ and injection $R$ can reduce the Fokker-Planck operator to a transition rate matrix on $\mathbb{S}$, and hence reduce the continuous-time stochastic hybrid system into a continuous-time Markov chain.
Following~\cite{chorin_optimal_2000}, the Fokker-Planck operator 
given in~\eqref{eq:fp} reduces to the transition rate matrix $A$ by
\begin{equation} \label{eq:reduce dynamics}
  A = P L R
\end{equation}
In practice, we are usually interested in 
a continuous state space $\mathbb{X}$ 
that is partitioned into hypercubes of edge length
$\eta$.
In this case, the transition rate matrix $A$ is
explicitly expressed as follows.


\begin{theorem}
Let $\mathbb{S} = \{s_1,s_2,\ldots,s_n\}$ be a partition%
\footnote{The partitions can be labeled by $S$ arbitrarily.}
of the $d$-dimensional continuous state space $\mathbb{X}$ 
into hypercubes of edge length $\eta$, 
and $P$ and $R$ be the corresponding projection and injection 
given by~\eqref{eq:projection}-\eqref{eq:injection2},
the transition rate from the state $s_i$ to the state $s_j$ 
($i \neq j$) at time $t$ is given by
\begin{equation} \label{eq:reduce_rate}
\begin{split}
  A_{ij} = \mathbf{n} \cdot \big( \mathbf{N} + \frac{\mathbf{M} \cdot \mathbf{n} (p_i - p_j)}{\eta} \big) + \mathbf{R}
\end{split}
\end{equation}
for $a,b \in [n]$, where $\mathbf{n}$ is (if exists) the unit vector of the boundary $s_i \cap s_j$ pointing from $s_i$ to $s_j$, 
$\mathbf{N}$ is a $d$ dimensional vector with components
\begin{equation} \label{eq:nn}
  \mathbf{N}_{a} = \int_{\partial s_i \cap \partial s_j} f_{a} (q,x) \d x,
\end{equation}
$\mathbf{M}$ is a $d \times d$ matrix with components
\begin{equation} \label{eq:mm}
  \mathbf{M}_{ab} = \int_{\partial s_i \cap \partial s_j} \sum_{c=1}^d \frac{g_{ac}(q,x) g_{cb}(q,x)}{2} \d x,
\end{equation}
and for an inner cell $s_i$,
\begin{equation} \label{eq:oo1}
  \mathbf{O} = \int_{s_i \times s_j} \frac{\mbf{I}_{s_j} (q,x) h(q, x, q', x') r(q', x') \mbf{I}_{s_i} (q',x')  }{\eta^d} \d x' \d x
\end{equation}
for a boundary cell $s_j$,
\begin{equation} \label{eq:oo2}
  \mathbf{O} = \int_{s_i \times s_j} \frac{\mbf{I}_{s_j} (q,x) h(q, x, q', x') \mathbf{n'} \cdot \mathbf{M} \cdot \mathbf{n'} p_i  }{\eta / 2;} \d x' \d x
\end{equation}
with $\mbf{I}_{s_i}$ being the indicator function of $s_i$ and $\mathbf{n'}$ being the vector pointing out of the boundary of the flow set.
\end{theorem}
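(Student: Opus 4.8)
The plan is to derive the formula $A = PLR$ componentwise by computing, for each pair of cells $s_i \neq s_j$, the value $(PLR)_{ij}$ acting on the indicator-type basis. Concretely, I would take as a test input the distribution $R p$ built from a probability vector $p$ (equivalently, apply $L$ to $\uni_{s_i}$-scaled pieces), push it through the Fokker--Planck operator $L$ from \eqref{eq:fp}, and then project the result back via $P$ by integrating over $s_j$. The key observation is that $L$ is a sum of four terms (drift, diffusion, jump-out, jump-in), so by linearity $A_{ij}$ decomposes into the corresponding contributions, and each can be localized to the geometry of the two cells.

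First I would handle the drift and diffusion terms. Applying $P$ to the drift term $-\sum_a \partial_{x_a}(f_a F)$ over $s_j$ turns, by the divergence theorem, into a surface integral over $\partial s_j$; the only part of that surface shared with $s_i$ is the common face $\partial s_i \cap \partial s_j$, which is where the vector $\mathbf n$ lives and where the component $\mathbf N_a = \int_{\partial s_i \cap \partial s_j} f_a\,\d x$ in \eqref{eq:nn} comes from. For the diffusion term $\sum_{a,b}\partial_{x_a}\partial_{x_b}(\tfrac12 \sum_c g_{ac}g_{cb} F)$, applying $P$ and integrating by parts once gives a surface flux through the same common face; because $R p$ is piecewise uniform with value $p_i/\mu(s_i)$ on $s_i$ and $p_j/\mu(s_j)$ on $s_j$, the remaining normal derivative across the face is a finite difference $(p_i - p_j)$ divided by the cell spacing $\eta$ (the distance between cell centers), which is exactly the structure appearing in \eqref{eq:reduce_rate} with $\mathbf M_{ab}$ from \eqref{eq:mm}. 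The combination $\mathbf n \cdot (\mathbf N + \mathbf M \cdot \mathbf n (p_i - p_j)/\eta)$ is then just the total advective-plus-diffusive flux from $s_i$ into $s_j$ through their shared face.

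Next I would treat the jump terms, which give the $\mathbf O$ (written $\mathbf R$ in \eqref{eq:reduce_rate}) contribution. The spontaneous jump-in term $\sum_{q}\int_{\mathbb A_q} h\, r\, F$ projected onto $s_j$ and restricted to source mass in $s_i$ yields, after substituting $F = R p$ (so $F = p_i \uni_{s_i}$ on $s_i$) and normalizing by the cell volume $\eta^d$, precisely \eqref{eq:oo1}; for a boundary cell the source of the jump is the outgoing flow on $\partial \mathbb A_q$, so the factor $r F$ is replaced by the normal diffusive flux $\mathbf n' \cdot \mathbf M \cdot \mathbf n'\, p_i$ and the volume normalization changes to the half-cell scaling, giving \eqref{eq:oo2}. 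Finally, the jump-out term $-r F$ is diagonal and does not contribute to $A_{ij}$ for $i \neq j$, so it is omitted from the off-diagonal formula.

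The main obstacle is the diffusion/boundary bookkeeping: getting the signs and the normalizations right when integrating the second-order operator by parts over a hypercube face, and correctly identifying the finite-difference quotient $(p_i - p_j)/\eta$ as the discrete analogue of the normal derivative of $R p$ across the shared face. One must be careful that $R p$ is not differentiable across faces, so the "derivative" is really the jump in the piecewise-uniform profile; making this rigorous requires interpreting $L R$ in a weak/distributional sense or, equivalently, viewing the Mori--Zwanzig projection as producing the best $L_1$ finite-volume approximation of the flux. The absorbing boundary condition $F = 0$ on $\partial \mathbb A_q$ must also be used to discard the contributions from faces that lie on the outer boundary of the flow set. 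Once these geometric reductions are in place, the remaining steps are routine integrations and collecting terms.
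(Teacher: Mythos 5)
Your proposal follows essentially the same route as the paper's proof: integrate the Fokker--Planck operator over a cell, convert the drift and diffusion terms into a flux through the shared face via the divergence/Stokes theorem (yielding $\mathbf{N}$ and $\mathbf{M}$), identify the spontaneous and forced jump terms with \eqref{eq:oo1} and \eqref{eq:oo2}, and assemble $A_{ij}$ via $A = PLR$. In fact you spell out more than the paper does (notably that the factor $(p_i - p_j)/\eta$ arises as the finite difference of the piecewise-uniform injection $Rp$ across the face, and that $LR$ must be read in a weak/finite-volume sense), where the paper simply states ``it is easy to check''; so your argument is correct and consistent with the paper's.
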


\begin{proof}
For simplicity, we first show the proof for the 1D case.
Specifically, for fixed $q$, we integrate both sides of~\eqref{eq:fp} on the cell $I = [p, p + \Delta p]$, and apply the Stokes theorem for the first two terms, we derive
\begin{equation} \label{eq:ifp}
  \begin{split}
    & \int_{I} \frac{\partial F(t,q,x)}{\partial t} \d x = - f (q,x) F(t,q,x) \Big\vert^{p + \Delta_p}_p
    + \frac{\partial}{\partial x} \frac{g^2 (q,x) F(t,q,x)}{2} \Big\vert^{p + \Delta_p}_p
    - \int_{I} r(q,x) F(t,q,x) \d x
    \\ & \quad + \sum_{q \in \mathcal{Q}} \int_{x \in \mathbb{A_q}} \int_{I} h(q, x, q', x') r(q',x') F(t, q', x') \d x \d x'
    + \sum_{q \in \mathcal{Q}} \int_{x \in \partial \mathbb{A_q}} \int_{I} h(q, x, q', x') (\mathbf{n} \cdot \mathbf{F}) \d x \d x'.
  \end{split}
\end{equation}
The left-hand side of~\eqref{eq:ifp} is the rate of probability change in the cell $I$.
On the right-hand side of~\eqref{eq:ifp}, (i) the combination of the first two terms $f (q,x) F(t,q,x) - \frac{\partial}{\partial x} \frac{g^2 (q,x) F(t,q,x)}{2}$ is the probability flow on the boundary;
(ii) the other terms correspond to average probability jumps inside the cell $I$.
The same is true for multidimensional cases.

By applying~\eqref{eq:reduce dynamics}, it is easy to check the probability flow between adjacent cells sharing a boundary is~\eqref{eq:mm} and~\eqref{eq:nn}.
The probability of jumping from one inner cell to another cell 
has the rate~\eqref{eq:oo1}.
Finally, the probability of jumping from one boundary cell to another cell 
has the rate~\eqref{eq:oo2}.
Thus,~\eqref{eq:reduce_rate} holds.
\end{proof}

Roughly speaking, the transition rate between two partitions in the same location is the flux of $f(q,x)$ across the boundary and the transition rate between two different locations is the flux of $r(q,x)$.


\subsection{Reducing MITL Formulas} \label{subsec:reduce_mitl}

The observables on the continuous-time stochastic hybrid system reduce to the corresponding continuous-time Markov chain using the projection $P$.
Let $y$ be an observable on the continuous-time stochastic hybrid system with weight function $\gamma(q,x)$.
To facilitate further discussion, we assume that $\gamma(q,x)$ is invariant under the projection $P$, \ie,
\begin{equation} \label{eq:ass}
  \gamma(q,x) = R P \gamma(q,x),
\end{equation}
which means that the function $\gamma(q,x)$ 
can be written as the linear combination 
of the indicator functions of the partitions
(sometimes called a \emph{simple function}.)
We define a corresponding observable $y'$ on the continuous-time Markov chain that derives from the model reduction procedure by
\begin{equation} \label{eq:rap}
\begin{split}
    y'(0) & = \sum_{q \in \mathcal{Q}} \int_{\mathbf{A}_q} \gamma(q,x) P F(0,q,x) \d x
     = \sum_{i=1}^n \paren{\int_{s_i} \gamma(q,x) \d x} \paren{\int_{s_i} F(0,q,x) \d x}
     = \sum_{i=1}^n r_i p(i).
\end{split}
\end{equation}

From now on, we will always denote the corresponding observable on the CTMC by $y'$ for any observable $y$ on the continuous-time stochastic hybrid system.

\subsection{Reduction Error Estimation} \label{subsec:error}

For a given observable $y$ with weight function $\gamma(q,x)$, the error of the projection $P$ with respect to the observable $y$ is defined by the maximal possible difference between $y$ and $y'$,
\begin{equation} \label{eq:projection error}
\begin{split}
  \Delta_y = \Big\vert \sum_{q \in \mathcal{Q}} \int_{\mathbf{A}_q} \gamma(q,x) (F(0,q,x) - RP F(0,q,x))  \d x \Big\vert.
\end{split}
\end{equation}

\begin{remark}
When refining the partition of $\mathbb{X}$, $RP \rightarrow I$ in the weak operator topology~\cite{rudin1973functional};
that is, any distribution function $F(q,x)$ on the state space, $\abs{ \sum_{q \in \mathcal{Q}} \int_{\mathbf{A}_q} \gamma(q,x) (F(q,x) - RPF(q,x)) } \rightarrow 0$ holds for any measurable weight function $\gamma(q, x)$.
Accordingly for~\eqref{eq:projection error}, $\Delta_y \rightarrow 0$ for any given $y$.
\end{remark}

By the definition of $\Delta_y$, we know that, at the initial time, the atomic propositions on the continuous-time stochastic hybrid system and the CTMC have the relations
\begin{align*}
& y(0) > c \Longrightarrow  y'(0) > c - \Delta_y, \quad y(0) < c \Longrightarrow  y'(0) < c + \Delta_y,
\end{align*}
and similarly,
\begin{align*}
& y'(0) > c + \Delta_y \Longrightarrow  y(0) > c, \quad y'(0) < c - \Delta_y \Longrightarrow  y(0) < c.
\end{align*}

To derive the relations of the observables between the continuous-time stochastic hybrid system and the CTMC at any time,
we define the reduction error of the observable $y$ at time $t$ due to the model reduction process by
\begin{equation} \label{eq:reduction error}
\begin{split}
  \Theta_y(t) = \abs{y(t) - y'(t)} = \Big\vert \sum_{q \in \mathcal{Q}} \int_{\mathbf{A}_q} \gamma(q,x) (e^{Lt} - R e^{At} P) F(0,q,x) \d x \Big\vert,
\end{split}
\end{equation}
where $F(0,q,x)$ is an initial distribution of the continuous-time stochastic hybrid system and $y'(t)$ is the corresponding observable of $y(t)$ on the CTMC.
This reduction error is illustrated in~\cref{fig:reduction error}.
Note that the diagram is not commutative; the difference between going along the two paths is related to the reduction error.

\begin{figure}[t]
\centering
\begin{tikzpicture}
  \matrix (m) [matrix of math nodes,row sep=3em,column sep=4em,minimum width=2em] {
     F(0,q,x) & F(t,q,x) \\
     p(0) & p(t) \\};
  \path[-stealth]
    (m-2-1) edge[dashed] node [left] {$R$} (m-1-1)
    (m-1-1) edge node [below] {$e^{Lt}$}  (m-1-2)
    (m-2-1) edge node [below] {$e^{At}$} (m-2-2)
    (m-1-2) edge[dashed] node [right] {$P$} (m-2-2);
\end{tikzpicture}
\caption{Diagram for reduction error.}  \label{fig:reduction error}
\end{figure}
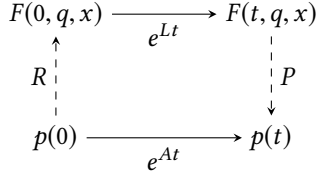

In general, the reduction error $\Theta(t)$ may not be bounded as $t \rightarrow \infty$.
To find a sufficient condition for boundedness, we define the reduction error of the Fokker-Planck operator $L$ by
\begin{equation} \label{eq:delta}
  \delta(t,q,x) = (L - R P L)  e^{t R P L} F(0,q,x).
\end{equation}
Accordingly, we define the integration of $\delta(t,q,x)$ with respect to the weight function $\gamma(q,x)$ by
\begin{equation} \label{eq:rate error}
\begin{split}
\Lambda_y =  \sup_{t\geq 0} \Big\vert \sum_{q \in \mathcal{Q}} \int_{\mathbf{A}_q} \gamma(q,x) (L - R P L)  e^{t R P L} F(0,q,x) \d x \Big\vert,
\end{split}
\end{equation}
which captures the maximal change of the time derivative of observable $y$.
When the reduction error $\delta(t,q,x)$ converges exponentially in time, an upper bound of the reduction error $\Theta(t)$ can be obtained.

\begin{definition} \label{def:cont}
For $\alpha > 0$, $\beta \geq 1$ and a given observable $y$, the continuous-time stochastic hybrid system is $\alpha$-contractive with respect to $y$, if for any initial distribution function $F(0,q,x)$ on the state space, we have
\begin{equation} \label{eq:cont}
\begin{split}
& \Big\vert \sum_{q \in \mathcal{Q}} \int_{\mathbf{A}_q} \gamma(q,x) e^{tL} \delta (t,q,x) \d x \Big\vert
\leq \beta e^{- \alpha t} \Big\vert \sum_{q \in \mathcal{Q}} \int_{\mathbf{A}_q} \gamma(q,x) \delta (t,q,x) \d x \Big\vert.
\end{split}
\end{equation}
where $\delta (t,q,x)$ is given by~\eqref{eq:delta}.
\end{definition}

This contractivity condition is to ensure that the model reduction error is bounded for all time, which is required for approximately keeping the truth value of temporal logic specifications of an infinite time horizon.
Although the condition seems restrictive, it is valid for a relatively wide range of systems including asymptotically stable systems. It is a commonly-used sufficient condition to guarantee the existence and uniqueness of an invariant measure for general dynamical systems, and the contractivity factor $\alpha$ is usually derived case-by-case. Using~\cref{def:cont}, we obtain the following theorem.

\begin{theorem} \label{thm:reduction error}
If the continuous-time stochastic hybrid system from \cref{ssub:ctshs} is $\alpha$-contractive, then for any $t \geq 0$, the reduction error $\Theta_y(t)$ for an observable $y$ satisfies
\begin{equation} \label{eq:uniform eb}
  \Theta_y(t) \leq \frac{\beta \Lambda_y}{\alpha} + \Delta_y.
\end{equation}
\end{theorem}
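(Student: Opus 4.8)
The plan is to compare the Fokker--Planck semigroup $e^{tL}$ with the lifted reduced semigroup $Re^{At}P$ through a Duhamel (variation-of-parameters) identity, and then to absorb the resulting error integral using the contractivity hypothesis of \cref{def:cont}. The first step is purely algebraic. Since $A = PLR$ by \eqref{eq:reduce dynamics} and $PR = I$, induction on $n$ gives $(RPL)^n R = R(PLR)^n = RA^n$, hence $e^{t\,RPL}R = Re^{tA}$, and therefore $Re^{At}P = e^{t\,RPL}\,RP$. Writing $\tilde L = RPL$, this yields the splitting
\[
  e^{tL} - Re^{At}P \;=\; \bigl(e^{tL} - e^{t\tilde L}\bigr) \;+\; e^{t\tilde L}\,(I - RP),
\]
so that, after pairing with $\gamma$ and applying it to $F(0,q,x)$, the triangle inequality bounds $\Theta_y(t)$ by the $\gamma$-weighted size of the ``dynamic'' term $(e^{tL}-e^{t\tilde L})F(0,q,x)$ plus that of the ``projection'' term $e^{t\tilde L}(I-RP)F(0,q,x)$.

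For the dynamic term I would apply Duhamel's formula: differentiating $s \mapsto e^{(t-s)L}e^{s\tilde L}F(0,q,x)$ and integrating over $[0,t]$ gives
\[
  \bigl(e^{tL} - e^{t\tilde L}\bigr)F(0,q,x) \;=\; \int_0^t e^{(t-s)L}\,(L-\tilde L)\,e^{s\tilde L}F(0,q,x)\,\d s \;=\; \int_0^t e^{(t-s)L}\,\delta(s,q,x)\,\d s ,
\]
where $\delta$ is exactly the Fokker--Planck reduction error of \eqref{eq:delta}. Pairing with $\gamma$, invoking \cref{def:cont} for each fixed $s$ (the contractivity estimate \eqref{eq:cont} with evolution time $t-s$), and then using the definition of $\Lambda_y$ in \eqref{eq:rate error} gives
\[
  \Bigl|\sum_{q\in\mathcal{Q}}\int_{\mathbf{A}_q}\gamma(q,x)\,e^{(t-s)L}\,\delta(s,q,x)\,\d x\Bigr|
  \;\le\; \beta e^{-\alpha(t-s)}\Bigl|\sum_{q\in\mathcal{Q}}\int_{\mathbf{A}_q}\gamma(q,x)\,\delta(s,q,x)\,\d x\Bigr|
  \;\le\; \beta\,\Lambda_y\,e^{-\alpha(t-s)} .
\]
Integrating in $s$ over $[0,t]$ bounds the contribution of the dynamic term by $\tfrac{\beta\Lambda_y}{\alpha}\bigl(1-e^{-\alpha t}\bigr)\le\tfrac{\beta\Lambda_y}{\alpha}$.

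For the projection term $e^{t\tilde L}(I-RP)F(0,q,x)$ I would argue that its $\gamma$-pairing is controlled by its value at $t=0$, namely $\Delta_y$ of \eqref{eq:projection error}: the residual $(I-RP)F(0,q,x)$ is a signed density of zero total mass supported cell-wise, and the lifted reduced flow $e^{t\tilde L}$ does not amplify its pairing against the projection-invariant weight $\gamma$ of \eqref{eq:ass} --- this non-expansiveness being a manifestation of the same stability that underlies \cref{def:cont}. Combining this with the dynamic bound by the triangle inequality gives $\Theta_y(t)\le\tfrac{\beta\Lambda_y}{\alpha}+\Delta_y$, which is \eqref{eq:uniform eb}.

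The main obstacle is precisely this last step, together with making the semigroup calculus rigorous. Because $L$ is the unbounded Fokker--Planck operator, the Duhamel manipulation must be justified within the well-posedness assumption made for \eqref{eq:fp} (on a suitable dense domain, or via the mild/integral formulation). One must also check that \eqref{eq:cont} may legitimately be invoked with the time parameter decoupled from the argument of $\delta$ --- equivalently, applied to $Re^{sA}PF(0,q,x)$ as a fresh initial distribution with evolution time $t-s$ --- and, most delicately, that the projection term really collapses to the constant $\Delta_y$ rather than only to a cruder, possibly time-growing, total-variation estimate; this is exactly where the stability/contractivity of the reduced chain and the simplicity of $\gamma$ enter.
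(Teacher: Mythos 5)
Your proposal is correct and follows essentially the same route as the paper: your Duhamel identity is exactly the Dyson formula the paper invokes, the identification $Re^{At}P=e^{tRPL}RP$ and the resulting split into a projection term (bounded by $\Delta_y$) and a dynamic term (bounded by $\beta\Lambda_y/\alpha$ via \cref{def:cont} and \eqref{eq:rate error}) match the paper's argument step for step. The caveats you flag --- the time-decoupled use of the contractivity estimate and the non-amplification of the projection term under $e^{tRPL}$ --- are treated just as tersely in the paper's own proof, so you have not lost anything relative to it.
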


\begin{proof}
By Dyson's formula~\cite{chorin_optimal_2000}, we can decompose the exponential of $L$ by
\begin{equation} \label{eq:dyson}
  e^{tL} =  e^{t R P L} + \int_{[0,t]} e^{(t-\tau )L} (L - R P L) e^{\tau R P L} \d \tau,
\end{equation}
which can be verified by taking time derivatives on both sides.
Substituting~\eqref{eq:dyson} into~\eqref{eq:reduction error} gives
\begin{equation} \label{eq:ee}
\begin{split}
  \Theta_y(t) \leq & \Big\vert \sum_{q \in \mathcal{Q}} \int_{\mathbf{A}_q} \gamma(q,x) (e^{t R P L} - R e^{t A} P) F(0,q,x) \d x \Big\vert \\
  & + \Big\vert \sum_{q \in \mathcal{Q}} \int_{\real^d \times [0,t]} \gamma(q,x) e^{(t-\tau)L} (L - R P L) e^{\tau R P L} F(0,q,x) \d \tau \d x \Big\vert
\end{split}
\end{equation}
Since the projection $P$ and the injection $R$ preserve the $L_1$ norm, $RPL$ is also a Fokker-Planck operator. Noting $R e^{t A} P F(0,q,x) = e^{t R P L} P F(0,q,x)$, by~\eqref{eq:projection error}, we see that the first term on the right-hand side of~\eqref{eq:ee} is less than $\Delta_y$.

For the second term on the right-hand side of~\eqref{eq:ee}, 
by~\eqref{eq:rate error}-\eqref{eq:cont},
we have
\begin{equation}
\begin{split}
   & \Theta_y(t) \leq \Delta_y + \Big\vert \sum_{q \in \mathcal{Q}} \int_{\mathbf{A}_q}  \int_{[0,t]} \gamma(q,x) e^{(t-\tau)L} \delta(\tau,q,x) \d \tau \d x \Big\vert \\
  & \leq \Delta_y + \Big\vert \sum_{q \in \mathcal{Q}} \int_{\mathbf{A}_q}  \int_{[0,t]} \beta e^{- \alpha (t-\tau)} \gamma(q,x) \delta(\tau,q,x) \d \tau \d x \Big\vert \leq \frac{\beta \Lambda_y}{\alpha} + \Delta_y.
\end{split}
\end{equation}
\end{proof}

\cref{thm:reduction error} implies the following relations between the atomic propositions on the continuous-time stochastic hybrid system and the CTMC.

\begin{theorem} \label{thm:ctshs_main}
If the continuous-time stochastic hybrid system given in \cref{ssub:ctshs} is $\alpha$-contractive, then we have
\begin{align}
& y(t) > c \Longrightarrow  y'(t) > c - \Big( \frac{\beta \Lambda_y}{\alpha} + \Delta_y \Big), \\
& y(t) < c \Longrightarrow  y'(t) < c + \Big( \frac{\beta \Lambda_y}{\alpha} + \Delta_y \Big),
\end{align}
and similarly,
\begin{align}
& y'(t) > c + \Big( \frac{\beta \Lambda_y}{\alpha} + \Delta_y \Big) \Longrightarrow  y(t) > c, \label{eq:rule1} \\
& y'(t) < c - \Big( \frac{\beta \Lambda_y}{\alpha} + \Delta_y \Big) \Longrightarrow  y(t) < c. \label{eq:rule2}
\end{align}
\end{theorem}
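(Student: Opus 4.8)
The plan is to obtain all four implications as immediate consequences of the uniform error bound established in \cref{thm:reduction error}. Write $\epsilon \defEQ \frac{\beta \Lambda_y}{\alpha} + \Delta_y$ for the quantity appearing on the right-hand side of \eqref{eq:uniform eb}, and note that $\epsilon \geq 0$ since $\Delta_y \geq 0$, $\Lambda_y \geq 0$, $\beta \geq 1$ and $\alpha > 0$. Because the system is assumed $\alpha$-contractive, \cref{thm:reduction error} applies verbatim and yields $\Theta_y(t) = \abs{y(t) - y'(t)} \leq \epsilon$ for every $t \geq 0$, which is equivalent to the two-sided estimate $y(t) - \epsilon \leq y'(t) \leq y(t) + \epsilon$ for all $t$.

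First I would prove the two forward implications. If $y(t) > c$, then $y'(t) \geq y(t) - \epsilon > c - \epsilon$, which is the first claim; if $y(t) < c$, then $y'(t) \leq y(t) + \epsilon < c + \epsilon$, which is the second. For the remaining two, which instead transfer observations made on the CTMC back to the original stochastic hybrid system, I would argue symmetrically using the other side of the two-sided estimate: if $y'(t) > c + \epsilon$, then $y(t) \geq y'(t) - \epsilon > c$, giving \eqref{eq:rule1}; and if $y'(t) < c - \epsilon$, then $y(t) \leq y'(t) + \epsilon < c$, giving \eqref{eq:rule2}.

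There is essentially no real obstacle here: the entire mathematical content resides in \cref{thm:reduction error}, and what remains is bookkeeping with the triangle inequality. The only points that warrant a moment's care are (i) keeping track of the direction of each inequality, since the CTMC observable $y'(t)$ may lie on either side of the true observable $y(t)$, so one must use the appropriate half of the two-sided estimate in each case; and (ii) observing that the bound $\epsilon$ is uniform in $t$, so the same constant works at the initial time $t=0$ (where it is in fact weaker than necessary, the corresponding relations there already holding with $\Delta_y$ alone in place of $\epsilon$, as recorded just before the theorem) as well as at all later times. Consequently the argument is a two-line deduction once \cref{thm:reduction error} is in hand, and the theorem is best viewed as a restatement of that result in the language of atomic propositions.
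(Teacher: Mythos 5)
Your proposal is correct and follows exactly the route the paper intends: the paper offers no separate proof of \cref{thm:ctshs_main}, presenting it as an immediate consequence of the uniform bound $\Theta_y(t)=\abs{y(t)-y'(t)}\leq \frac{\beta\Lambda_y}{\alpha}+\Delta_y$ from \cref{thm:reduction error}, and your two-sided estimate plus the four directional deductions is precisely that omitted bookkeeping.
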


In \cref{thm:ctshs_main}, the term $\Delta_y$ bounds the initial model reduction error and the term $\frac{\beta \Lambda_y}{\alpha}$ bounds the model reduction error accumulated over time.
Following \cref{thm:ctshs_main}, to verify an MITL formula $\phi$ for an $\alpha$-contractive continuous-time stochastic hybrid system introduced in \cref{ssub:ctshs}, we can strengthen $\phi$ to $\psi$ by replacing the atomic propositions according to~\eqref{eq:rule1}-\eqref{eq:rule2}.
If $\psi$ holds for the CTMC derived from the continuous-time stochastic hybrid system following the model reduction procedure of \cref{subsec:reduce_dynamics,subsec:reduce_mitl}, then $\phi$ holds for the continuous-time stochastic hybrid system.

\begin{example} \label{ex:model reduction}

Following Example~\ref{ex:dynamics} and \ref{ex:specification}, the invariant distribution of this process is $F_{\rm inv} = \uni_\mathbb{X} / 3$.
We partition $\mathbb{X}$ into intervals of length $1/N$. By the above model reduction procedure it reduces to a CTMC with transition {rate} matrix $M$ given by
\begin{equation*}
    M_{ij} = \frac{\delta_{ij}}{4} + \frac{1}{4 N}
\end{equation*}
where $i \in [3N]$ and $j \in [3N]$. The invariant distribution $F_{\rm inv}$ remains unchanged, and the MITL formula to check is
\begin{align*}
    & \phi_1' = \top\ \mc U \left( y_2'(t) > \frac{1}{4} + {\Theta_y(t)} \right)
    \\ & \phi_2' = \left( y_1'(t) > \frac{1}{2} + {\Theta_y(t)} \right) \mc U \left( y_2'(t) > \frac{1}{4} + {\Theta_y(t)} \right)
\end{align*}
where ${\Theta_y(t)}$ is the model reduction error and
\begin{align*}
    y_1'(t) = \sum_{i=1}^{N} p_i(t),
    \quad y_2'(t) = \sum_{i=2N+1}^{3 N} p_i(t).
\end{align*}
When $N=30$, we have ${\Theta_y(t)} \leq 0.02$ from~\eqref{eq:projection} and~\eqref{eq:rate error}.

\end{example}

\section{Statistical Model Checking of MITL} 
\label{sec:mc}

Let $C$ be the CTMC derived from the model reduction
(\cref{sec:modelreduction})
and $\varphi$ be corresponding reduced MITL formula.
In this section, we propose a statistical model checking algorithm
to verify the formula $\varphi$ for the CTMC $C$.

We denote the set of atomic propositions 
contained in $\varphi$ by $\AP_\varphi$.
The pair $C, \varphi$ can generate a \emph{signal} 
by evaluating the truth value of
the atomic propositions in $\AP_\varphi$
on the CTMC $C$ for each time. 
We use $\Sem{C, \AP_\varphi}$ to denote the singleton set 
that contains this signal.
Let $T_{C,\varphi}$ be the \TAls\ such that 
$\Sem{C,\AP_\varphi} \subseteq \Lang[T_{C,\varphi}]$.
%
Using \Cref{thm:mitl2ta}, we construct 
two \TAlp\ $T_{\varphi}$ and $T_{\neg\varphi}$
such that their languages are 
the signals accepted and rejected by $\varphi$, respectively.
%
%
If the intersection of $\Lang[T_{C,\varphi}]$ and $\Lang[T_\varphi]$ is empty then 
$C$ violates $\varphi$. Similarly,
if the intersection of $\Lang[T_{C,\varphi}]$ and $\Lang[T_{\neg\varphi}]$ is empty then 
$C$ satisfies $\varphi$.
This emptiness problem for the intersection of \TAlp\ 
is known to be \PSPACEcomp~\cite{94-TA}.
However, it is possible that none of the two intersections is empty.
To avoid this situation, we assume that each signal of $\Lang[T_{C,\varphi}]$ 
remains close to the signal in $\Sem{C, \AP_\varphi}$.
That is, if the signal in $\Sem{C, \AP_\varphi}$ satisfies/violates $\varphi$,
then there is a close signal that violates/satisfies $\varphi$.
We will formalize this later.

We use a statistical method to construct the \TAls\ $T_{C,\varphi}$.
Let $\mY(t)$ be the probability distribution of the state of the CTMC $C$, 
and $f(t)$ be the set of atomic propositions that $\mY(t)$ satisfies at the time $t \in [0,\infty)$.
%
%
Since the CTMC converges to a unique invariant distribution $\inv{\mY}$, there exists
a known constant $\delta'\oftype\Real$ and 
a known estimation $\mY^*$ of $\inv{\mY}$ 
such that
\begin{itemize}
\item $\forall(r\cdot p(t) \sim c) \in \AP_\varphi, \Size{\mR\cdot\inv{\mY}-\mC}>\delta'$, and
\item $\LOne{\inv{\mY}-\mY^*}<\frac{\delta'}{3}$,
where $\LOne{\cdot}$ is the $\ell_1$ norm.
\end{itemize}

For each atomic proposition $(y \sim c)$, 
where $y$ is of the form $r \cdot p(t)$, 
we assume \wLOG\ that 
\begin{itemize}
\item $\mR$ is not identical to $\bf0$ (otherwise, $(y\sim c)$ can be replaced with $\top$ or $\bot$); and
\item the maximum absolute value in $\mR$ is exactly $1$ (by scaling the parameters in $(y \sim c)$).
\end{itemize}
Furthermore, let $T$ be a time such that $\LOne{\mY(T)-\mY^*}<\frac{\delta'}{3}$ holds
(we will show how to find $T$ later in this section).
For any $t \geq T$, we have 
$\LOne{\mY(t)-\inv{\mY}}<\frac{2 \delta'}{3}$.
Also, we assume that $\mR\cdot\inv{\mY}-c > \delta'$ holds
(the discussion for $\mR\cdot\inv{\mY}-c<-\delta'$ is similar).
By $\abs{\mR\cdot\mY(t)-\mR\cdot\inv{\mY}}\leq\LOne{\mY(t)-\inv{\mY}}<\frac{2\delta'}{3}$,
we know $r\cdot\mY(t)-c>\frac{\delta'}{3}$.
Then by $\abs{\mR\cdot\mY(t)-\mR\cdot\mY^*}\leq\LOne{\mY(t)-\mY^*}<\frac{\delta'}{3}$,
we have
$r\cdot\mY^*>c$.
%
Therefore, the truth value of $(y\sim c)$ is fixed for any $t > T$ 
and can be determined by looking at $\mY^*$.
%

We use \Cref{alg:truncate} to find a time $T$ such that $\mY(T)$
is $\frac{\delta'}{3}$-close to $\mY^*$ (the estimation of the invariant distribution).
Our statistical algorithm compares $\mY(T)$ and $\mY^*$ for successively larger values of $T$ 
until $\LOne{\mY(T)-\mY^*}<\frac{\delta'}{3}$ holds.
To check if two distributions are close,
we employ \Cref{thm:dist}.
When $\LOne{\mY(t)-\mY^*}>\frac{\delta'}{3}$,
starting from the iteration $i=1$,
the probability of \Cref{thm:dist} not rejecting $t$ 
is at most $\alpha' \times2^{-i}$.
Thus, the probability of returning a wrong time $T$ is at most $\alpha'$. 

\begin{lemma}\cite{dist-closeness-2013-Batu}\label{thm:dist}
For any $\alpha,\delta>0$, and 
any two distributions $\mY$ and $\mY'$ on $n$ discrete values, 
there is a test $\ALGClose(\mY, \mY', \alpha, \delta)$ which runs in 
time $O\Paren*{n^{2/3} \delta^{-8/3}\log\Paren*{n/\alpha}}$ such that 
(i) if $\LOne{\mY-\mY'}\leq \max\Paren*{\frac{\delta^{4/3}}{32\sqrt[3]{n}},\frac{\alpha}{4\sqrt{n}}}$,
then the test accepts with probability at least $1-\alpha$; 
and (ii) if $\LOne{\mY-\mY'} > \delta$, 
then the test rejects with probability at least $1-\alpha$.
\end{lemma}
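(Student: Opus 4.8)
The plan is to reconstruct the closeness test of Batu, Fortnow, Rubinfeld, Smith and White~\cite{dist-closeness-2013-Batu}, so the argument is less a new proof than a description of their construction together with an explanation of where the stated sample (and hence time) bound comes from. The first step is to reduce $\ell_1$ testing to $\ell_2$ testing. I would use the elementary inequality $\LOne{v}\le\sqrt{|S|}\,\Norm{v}_2$ for any vector supported on an index set $S$, together with the fact that $\Norm{\mY-\mY'}_2^2=\Norm{\mY}_2^2+\Norm{\mY'}_2^2-2\langle\mY,\mY'\rangle$ admits an \emph{unbiased} estimator $\widehat{Q}$ built purely from collision counts: draw $m$ i.i.d.\ samples from each of $\mY,\mY'$, count self-collisions in each sample and cross-collisions between the two, and combine. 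A Chebyshev (or median-of-means) analysis shows that $\mathrm{Var}(\widehat{Q})$ is controlled by $(\Norm{\mY}_2^2+\Norm{\mY'}_2^2)/m^2$ plus lower-order terms, so $\widehat{Q}$ estimates $\Norm{\mY-\mY'}_2^2$ to additive error $\epsilon_2^2$ with confidence $1-\alpha$ using $m=O\big(\max(\Norm{\mY}_2,\Norm{\mY'}_2)\,\epsilon_2^{-2}\log(1/\alpha)\big)$ samples. The upshot is that an $\ell_2$ closeness test is cheap exactly when the two distributions are \emph{flat}, i.e.\ have small $\ell_2$ mass.

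The second step handles the obstruction to applying this globally, namely a few high-mass coordinates. I would fix a threshold $\theta=\Theta(n^{-2/3})$, call coordinate $i$ \emph{heavy} if $\mY_i\ge\theta$ or $\mY'_i\ge\theta$ (there are at most $2/\theta=O(n^{2/3})$ such $i$), and \emph{light} otherwise. On the light part $L$ one has $\Norm{\mY_L}_2^2\le\theta\LOne{\mY_L}\le\theta$, so the collision-based $\ell_2$ test is efficient there, and since $\LOne{\mY_L-\mY'_L}\le\sqrt{|L|}\,\Norm{\mY_L-\mY'_L}_2$ with $|L|\le n$, an $\ell_2$ verdict on $L$ translates into the desired $\ell_1$ bound on the light contribution. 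The heavy part is handled directly from the same pool of samples: identify the heavy set (coordinates whose mass lies within an $O(\alpha/\sqrt n)$ band of $\theta$ may be misclassified, which is harmless), form empirical frequencies $\widehat{\mY}_i,\widehat{\mY}'_i$, and output $\sum_{i\ \mathrm{heavy}}|\widehat{\mY}_i-\widehat{\mY}'_i|$; because this aggregates via Cauchy--Schwarz ($\sum_i\sqrt{\mY_i/m}\le\sqrt{|H|/m}$ since $\sum_i\mY_i\le 1$), it already attains $\ell_1$ accuracy $\delta$ once $m=O(|H|\,\delta^{-2}\log(n/\alpha))$. Finally I would combine the two verdicts by a boolean AND, take a union bound over the $O(n^{2/3})$ heavy coordinates and the $O(1)$ subroutines, and use median-of-means to push the internal confidence to $1-\alpha$; tracing the $\ell_1\leftrightarrow\ell_2$ conversion factors and optimizing $\theta$ is what produces the completeness window $\LOne{\mY-\mY'}\le\max\big(\delta^{4/3}/(32\sqrt[3]{n}),\alpha/(4\sqrt n)\big)$ in~(i), the soundness threshold $\delta$ in~(ii), and the total sample count $O(n^{2/3}\delta^{-8/3}\log(n/\alpha))$. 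Since each sample is drawn in $O(1)$ time and the collision counts are computed by hashing in near-linear time, this bound is also the running time.

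The step I expect to be the main obstacle is the variance bound for the collision-based $\ell_2$ estimator coupled with the choice of $\theta$: one must verify \emph{simultaneously} that the heavy coordinates can be estimated accurately enough and that the residual light distribution is flat enough, with both budgets meeting at $\Theta(n^{2/3})$ samples, and one must track how the $\ell_2\to\ell_1$ blow-up factor $\sqrt{|S|}$ interacts with the per-coordinate accuracy so that the $\delta$-exponents come out as $\delta^{-8/3}$ (sample complexity) and $\delta^{4/3}$ (completeness slack). Everything else --- the Chernoff bounds for the heavy empirical frequencies, the union bound, the median-of-means amplification, and the final boolean combination of the two sub-tests --- is routine bookkeeping once the $\ell_2$ machinery and the threshold are in place.
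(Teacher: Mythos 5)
There is nothing in the paper to compare against here: Lemma~\ref{thm:dist} is not proved in this paper at all --- it is imported as a black box (as the \cite{dist-closeness-2013-Batu} tag indicates) from Batu, Fortnow, Rubinfeld, Smith and White, and is only used inside \Cref{alg:truncate}. Your write-up is, as you say yourself, a reconstruction of that external proof, and structurally it is the right one: the heavy/light split, the collision-based unbiased estimator of $\Norm{\mY-\mY'}_2^2$ with Chebyshev plus median-of-means amplification, the Cauchy--Schwarz passage from $\ell_2$ to $\ell_1$ on the light part, and direct empirical estimation on the $O(n^{2/3})$ heavy coordinates are exactly the ingredients of the cited test, so as a proof sketch of the quoted result it is sound.

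Two cautions about the part you defer as ``bookkeeping'', since that is where the stated constants actually live. First, with your fixed threshold $\theta=\Theta(n^{-2/3})$ and the sharp variance bound $m=O\big(\sqrt{b}\,\epsilon_2^{-2}\big)$ you quote for the collision estimator, the light-part budget comes out as $O(n^{2/3}\delta^{-2})$, not $O(n^{2/3}\delta^{-8/3})$; in the source, the exponent $8/3$ and the completeness slack $\delta^{4/3}/(32\sqrt[3]{n})$ emerge from a cutoff that depends on $\delta$ as well as $n$ (and is identified empirically from the sample, since the masses $\mY_i$ are unknown) combined with their cruder variance analysis of the collision statistic, so the threshold choice and the exponents cannot be decoupled the way your sketch suggests. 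Second, the term $\alpha/(4\sqrt{n})$ in the paper's statement is a transcription slip for $\delta/(4\sqrt{n})$ --- in the original theorem both terms of the completeness window involve the distance parameter, not the confidence --- so the ``$O(\alpha/\sqrt{n})$ misclassification band'' you built around it is an artifact of the statement rather than something the proof needs.
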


\begin{algorithm}[t]
\KwData{%
  CTMC $(C,\mY_0)$,
  estimation of invariant distribution $\mY^*$,
  Atomic formula $(y\sim c)$,
  parameters $\alpha'$, and $\delta'$}
\Function{$\mathtt{DurationOfSimulation}$}{%
  $t \gets 1$\\
  \While{$\ALGClose\Paren*{\mY(t), \mY^*, \frac{1}{2}\alpha', \frac{\delta'}{3}} = \mathtt{failed}$}
  {
    $t \gets 2\times t$\\
    $\alpha' \gets \frac{1}{2}\alpha'$
  }
  \Return{t+1}}
\caption{Truncating time horizon}\label{alg:truncate}
\end{algorithm}

Before constructing the \TAls\ for times within $[0,T]$, we first explain how to 
statistically verify if $\mY(t)$ satisfies an atomic proposition $(y\sim c)$.
%
For now, assume that elements of $\mR$ are from $\Brace{0,1}$.
Then, $\mY(t)$ satisfies $(y\sim c)$ \iFF\
the probability of drawing a state $s$ from $\mY(t)$ with $\mR(s)=1$ 
is great than $c$.
This can be statistically checked by drawing samples from $\mY(t)$ and 
using the sequential probability ratio test (SPRT) 
\cite{sprt-1945-wald,sen_statistical_2005,yesno-to-yesnounknown-2006-Younes}.
It requires as input 
  an indifference parameter $\delta\oftype(0,1)$, and 
  the error bounds $\alpha,\gamma\oftype(0,1)$. 
The output of this test, called $\ALG_0$, is \Yes, \No, or \Unknown\ with the following guarantees:
\begin{subequations}
\newcommand{\LongFML}{\ensuremath{\Size*{\mR\cdot\mY(t)-c} > \delta}}%
\begin{align}
\Prb[\mathtt{res}=\makebox[0pt][l]{\No}     \phantom{\Unknown} \mid\makebox[0pt][l]{\ensuremath{\mR\cdot\mY(t)    > c}}\phantom{\LongFML}]\leq\alpha, \\
\Prb[\mathtt{res}=\makebox[0pt][l]{\Yes}    \phantom{\Unknown} \mid\makebox[0pt][l]{\ensuremath{\mR\cdot\mY(t)\not> c}}\phantom{\LongFML}]\leq\alpha, \\
\Prb[\mathtt{res}=\makebox[0pt][l]{\Unknown}\phantom{\Unknown} \mid\makebox[0pt][l]{\ensuremath{\LongFML}}             \phantom{\LongFML}]\leq\gamma.
\end{align}
\end{subequations}
The parameters $\alpha,\gamma,\delta$ can be made arbitrarily small
at the cost of requiring more samples. 
For the general case that the elements of $\mR$ are real numbers, 
the SPRT is not applicable.
Instead, we can use a technique due to Chow and Robbins
\cite{chow-robbins-1965}.

Given that $T$ is known,
we construct the \TAls\ for the time interval $[0,T]$.
For simplicity, we focus on constructing $T_{C,\Brace{P}}$ for an atomic proposition 
$P: y = \sum_{i=1}^n \mR_i p_i > c$, denoted by the pair $\Paren{\mR, c}$.
Then, at every time $t$, $f(t)$ is either the emptyset or $\Brace{\Paren{\mR, c}}$.
Let $T_{C,\Brace{P}}(t)$ be the set of reachable locations of $T_{C,\{P\}}$ at time $t$.
Given the parameters $\delta>0$, 
let $\Delta>0$ be a value at most 
$\frac{\delta}{3}\max\Brace*{\Size*{\frac{\mathtt{d}}{dt}(\mR\cdot\mY)(t)}\mid t\in[0,T]}^{-1}$ 
($\Delta$ can be set to $\frac{\delta}{3}\LInf{r}\LOne{M}$, where 
$\LInf{\cdot}$ and $\LOne{\cdot}$ are respectively $\ell_\infty$ and $\ell_1$ induced norms).
For any $t \in [0,T]$ and $t' \in [t-\Delta,t+\Delta] \cap[0,T]$, we have
\begin{enumerate}
\item if $\mR\cdot\mY(t) - c >  \frac{\delta}{3}$  then $\mR\cdot\mY(t') > c$, 
\item if $\mR\cdot\mY(t) - c < -\frac{\delta}{3}$  then $\mR\cdot\mY(t') < c$, 
\item if $\Size{\mR\cdot\mY(t)-c}\leq\frac{2\delta}{3}$  then $\Size{\mR\cdot\mY(t')-c} \leq \delta$.
\end{enumerate}
We partition $[0,T)$ into $\Floor*{\frac{T}{2\Delta}}+1$ intervals,  each of size strictly less than $2\Delta$.
Let $[t_1,t_2)$ be one of these intervals and define $t = \frac{1}{2}(t_1+t_2)$.
We then run $\ALG_0$ twice as follows, where $\alpha'$ and $\gamma'$ are obtained by dividing 
input parameters $\alpha$ and $\gamma$ over $\Ceil*{\frac{T}{2\Delta}}$.
\begin{align*}
  \mathtt{res}_1=\ALG_0\Paren*{\mR\cdot\mY(t), c+\frac{\delta}{3},\frac{1}{\Size{\AP_\varphi}}\alpha', \frac{1}{\Size{\AP_\varphi}}\gamma', \frac{\delta}{3}}, \\
  \mathtt{res}_2=\ALG_0\Paren*{\mR\cdot\mY(t), c-\frac{\delta}{3},\frac{1}{\Size{\AP_\varphi}}\alpha', \frac{1}{\Size{\AP_\varphi}}\gamma', \frac{\delta}{3}},
\end{align*}
If $\mathtt{res}_1=\Yes$, 
then $\forall t' \in [t_1,t_2), \Paren{\mR\cdot\mY(t')>c}$ holds with a bounded error $\alpha'$, so we set $T_{C,\{P\}}(t)=\{P\}$.
If $\mathtt{res}_2=\No$, 
then $\forall t' \in [t_1,t_2),\Paren{\mR\cdot\mY(t')<c}$ holds with a bounded error $\alpha'$, so we set $T_{C,\{P\}}(t)=\{\emptyset\}$.
Otherwise, for any time $t'$ in the interval, 
$\Size{\mR\cdot\mY(t')-c}\leq\delta$ with a bounded error $\gamma'$,
so we set
\begin{itemize}
\item $T_{C,\{P\}}(t)=\{q,q'\}$,
\item $\hL(q)=\{P\}$ and $\hL(q')=\emptyset$,
\item entry to $q$ or $q'$, and
\item switches between $q$ and $q'$ when their common invariant permits.
\end{itemize}
This ensures that within $[t_1,t_2)$, both states $q,q'$ can be reached and they can switch arbitrary many times. 
Intuitively, this means the atomic propositions 
within this interval are unknown and not fixed.
The algorithm to construct $T_{C,\varphi}$
is given by \Cref{fig:alg-part2-ct}.

Based on \Cref{alg:truncate,fig:alg-part2-ct},
the complete algorithm $\ALG$ to statistically verify  
the MITL formula $\varphi$ for the CTMC $C$
with the parameters $\delta,\delta',\alpha,\gamma$,
is given by the explanation at the beginning of this section.
The parameters $\delta'$ and $\frac{1}{2}\min\Brace{\alpha,\gamma}$ are given to \Cref{alg:truncate}, and
the parameters $\delta$, $\frac{1}{2}\alpha$, $\frac{1}{2}\gamma$ are given to \Cref{fig:alg-part2-ct}.
We have the following guarantee 
on the return $\mathtt{res}$ 
of the complete algorithm $\ALG$:
\begin{align}
  & \Prb[\mathtt{res}=\No\ \,  \mid C \Sat \varphi] \leq \alpha     \label{fml:type-1}  \\
  & \Prb[\mathtt{res}=\Yes     \mid C \nSat\varphi] \leq \alpha     \label{fml:type-2}  
\end{align}

As for the \Unknown\ output, let $B^{\delta}(\mR\cdot\mY)$ be the 
tube of functions that are point-wise $\delta$-close to $\mR\cdot\mY$
(formally, a function $f:\nnReal\goesto\Real$ is in $B^{\delta}(\mR\cdot\mY)$ \iFF\ for any $t\oftype\nnReal$, $\Size{f(t)-r\cdot\mY(t)}\leq\delta$).
The algorithm guarantees that
\begin{align}
\big(\forall \sigma\oftype B^{\delta}(\mR\cdot\mY), \ \sigma\Sat\varphi\big) \implies
\Prb[\mathtt{res}{=}\Unknown] \leq \alpha+\gamma\label{fml:type-3}\\
\big(\forall \sigma\oftype B^{\delta}(\mR\cdot\mY), \ \sigma\nSat\varphi\big) \implies
\Prb[\mathtt{res}{=}\Unknown] \leq \alpha+\gamma\label{fml:type-4}
\end{align}
Intuitively, if all the functions that are close to $\mR\cdot\mY$ satisfy $\varphi$ or none of them does then the probability of returning \Unknown\ is at most $\alpha+\gamma$.%
\footnote{There is a slight abuse of notation in \eqref{fml:type-3} and \eqref{fml:type-4}.
They use a function of type $\nnReal\goesto\Real$. However, $\Sat$ requires a signal (function 
of type $\nnReal\goesto2^\AP$). The signal contains atomic proposition $y\sim c$ at time $t$ \iFF\
$y(t)\sim c$ holds.}
 
\begin{algorithm}[t]
$h \gets \max\Brace*{\Size*{\frac{\mathtt{d}}{dt}(\mR\cdot\mY)(t)}\mid t\in[0,T]}$\\  
$\Delta \gets \frac{\delta}{3h}$\\
$n \gets \Size*{\AP_\varphi}\Ceil*{\frac{T}{2\Delta}}$\\
$T_{C,\{P\}} \gets $ an empty automaton\\
$\hX \gets \{t\}$, $q_{\rm last} \gets \bot$

\ForAll{$i\gets 0$ to $\Floor*{\frac{T}{2\Delta}}$}{
$\mathtt{res_1} \gets 
\ALG_0\Paren*{\mR\cdot\mY\Paren*{(i+\frac{1}{2})2\Delta},c+\frac{\delta}{3},\frac{\alpha}{2n},\frac{\beta}{2n},\frac{\delta}{3}}$
\\
$\mathtt{res_2} \gets 
\ALG_0\Paren*{\mR\cdot\mY\Paren*{(i+\frac{1}{2})2\Delta},c-\frac{\delta}{3},\frac{\alpha}{2n},\frac{\beta}{2n},\frac{\delta}{3}}$
\\
add a new location $q$ to $\hQ$

\uIf{$\mathtt{res}_1=\Yes$}{$\hL(q)\gets\{P\}$}
\uElseIf{$\mathtt{res}_2=\No$}{$\hL(q)\gets \emptyset$}
\uElse{$\hL(q)\gets \Unknown$}

$\hI(q) \gets 2i\Delta\leq t < 2(i+1)\Delta$

\uIf{$q_{\rm last} \neq \bot$}{$\hE\gets\hE\cup\Brace*{(q_{\rm last},q,\emptyset)}$}
\uElse{$\hQi\gets \{q\}$}
 
$q_{\rm last} = q$
}
add a new location $q$ to $\hQ$\\
$\hI(q)\gets \mathtt{true}$, $\hQf\gets\Brace{q}$\\
$\hE\gets\hE\cup\Brace*{(q_{\rm last},q,\emptyset),(q,q,\emptyset)}$\\
\uIf{$\mR\cdot\inv{\mY}>c$}{$\hL(q) \gets \{P\}$}
\uElse{$\hL(q) \gets \emptyset$}

$T_{C,\{P\}} \gets$ replace any \Unknown\ location in $\hQ$ with $q$ and $q'$ labeled $\{P\}$ and $\emptyset$. Duplicate edges from/to $q$ and $q'$ accordingly

Add $(q,q',\emptyset)$ and $(q',q,\emptyset)$ to $\hE$ for every split locations in the previous step.

\Return{$T_{C,\{P\}}$}

\caption{Constructing the \TAls\ $T_{C,\varphi}$}
\label{fig:alg-part2-ct}
\end{algorithm}

\begin{example}
Following Example~\ref{ex:model reduction}, we run our algorithm on the CTMC and derive that both $\phi_1'$ and $\phi_2'$ are true. This implies that the formulas $\phi_1$ and $\phi_2$ given in Example~\ref{ex:specification} are true on the system given in Example~\ref{ex:dynamics}.
\end{example}

\section{Discrete Hybrid Systems}
\label{sec:model_reduction_of_discrete_hybrid_systems}

In this section, we study the verification of temporal properties for discrete-time stochastic hybrid systems.
We follow the formulation of the discrete-time stochastic hybrid systems from~\cite{abate_ApproximateModelChecking_2010a,abate_ApproximateAbstractionsStochastic_2011} and use the inequality linear temporal logic (iLTL)~\cite{04-iLTL} to capture the temporal properties of interest.
The iLTL specifications are verified on the discrete-time stochastic hybrid systems by model reduction and statistical model checking in a similar way as \cref{sec:modelreduction,sec:mc}.

\paragraph*{Discrete-time stochastic hybrid systems}
Following the formulation of~\cite{abate_ApproximateModelChecking_2010a}, we focus on a Fokker-Planck formulation and interpretation of the model.
Using the notations from \cref{ssub:ctshs},
the dynamics of the system is captured by 
the initial distribution $F(0,q,x)$ 
on the state space $\mathbb{X} \subseteq \mathcal{Q} \times \mathbb R^d$
and the transition function $T(q',x',q,x)$,
which satisfies
\begin{equation}
  \sum_{q \in \mathcal{Q}} \int_{\mathbf{A}_q} T(q',x',q,x) \d x' = 1,
\end{equation}
for any $(q,x) \in \mathbb{X}$.
The transition function $T(q',x',q,x)$ can be derived from 
the dynamics of the continuous-time stochastic hybrid systems 
given in \cref{ssub:ctshs}
by time discretization
\cite{abate_ApproximateModelChecking_2010a,abate_ApproximateAbstractionsStochastic_2011}.
The observable $y$ of the system is defined in the same way as in the continuous-time case.

We call the transition function $T(q',x',q,x)$ $\alpha$-contractive,
if for any two distributions $F(q, x)$ and $G(q, x)$, it holds that
\begin{equation} \label{eq:dtshs_contractive}
\begin{split}
  \Big\Vert \sum_{q \in \mathcal{Q}} \int_{\mathbf{A}_q} T(q',x', q, x) 
  \big( F(q,x) - G(q,x) \big) \mathrm d x \Big\Vert \leq \alpha \nm{F(q,x) - G(q,x)}
\end{split}
\end{equation}
where $\nm{\cdot}$ is the $L_1$-norm.
This $\alpha$-contractive condition is different 
from its continuous-time counterpart
(\cref{def:cont}) in two aspects.
First, the parameter $\alpha$ of \eqref{eq:dtshs_contractive}
is the contractive factor for one discrete time step, 
while the parameter $\alpha$ of \eqref{eq:cont}
is the contractive rate for the continuous time.
Second, the contractivity of \eqref{eq:cont} is defined
with respect to the given observable, while 
the contractivity of \eqref{eq:dtshs_contractive}
is independent of the observables.
For the discrete time, the contractivity of \eqref{eq:dtshs_contractive}
generally holds for many common stochastic dynamics,
such as (discrete-time) diffusion processes.

\paragraph*{Inequality linear temporal logic (iLTL)}
We use the iLTL~\cite{04-iLTL} to capture the temporal properties of interest
for the discrete-time stochastic hybrid systems.
The iLTL can be viewed as the discrete-time version of 
the MITL introduced in \cref{sub:mitl}.
It is a variation of the common linear temporal logic~\cite{ltl2buchi2001Gastin}
by setting the atomic propositions \AP\
to be inequalities of the form $y \sim c$,
where $c \in \Rat$, 
$\sim \in \Brace{<,\leq,\geq,>}$, 
and $y$ is an observable of the system given by \eqref{eq:obs}.
(This is similar to the case of MITL in \cref{def:MITL-syn}.)
Again in the syntax of iLTL, we drop the negation operator $\neg$
by pushing it inside and using completeness of $\Brace{<,\leq,\geq,>}$.

\begin{definition}[iLTL Syntax]\label{def:iLTL-syn}
The syntax of iLTL formulas is defined using the BNF rule:
\[
\phi = \bot \mid \top \mid y \sim c \mid \phi \land \phi \mid
\phi \lor \phi
\mid \Next{\phi}
\mid \Until{\phi}{\phi}
\mid \Release{\phi}{\phi},
\]
where $c\in\Rat$ and $\sim\in\Brace{<,\leq,\geq,>}$.
\end{definition}

The discrete-time stochastic hybrid system induces a signal $f: \mathbb N \rightarrow 2^\AP$ by $(y \sim c) \in f(t)$ iff $y \sim c$ holds at time $t$.
According, we define the semantics of iLTL on the system by~\cref{def:iLTL-sem}.

\begin{definition}[iLTL Semantics]\label{def:iLTL-sem}
Let $\phi$ be an iLTL formula and $f$ be a discrete-time signal.
The satisfaction relation $\Sat$ between $f$ and $\phi$ is inductively defined according to the rules:
\begin{gather*}
\begin{array}{l@{\ \ \ }c@{\ \ \ }l}
f\Sat \bot     & {\rm iff} & \mbox{false}     \\
f\Sat \top     & {\rm iff} & \mbox{true}      \\
f\Sat y\sim c  & {\rm iff} & \Paren{y\sim c}\in f(0) \\
f\Sat \phi\lor \psi       & {\rm iff} & \Paren{f\Sat\phi}\lor \Paren{f\Sat\psi}   \\
f\Sat \phi\land\psi       & {\rm iff} & \Paren{f\Sat\phi}\land\Paren{f\Sat\psi}   \\
f\Sat \Next{\phi}         & {\rm iff} & f^1\Sat\phi \\
f\Sat \Until{\phi}{\psi}  & {\rm iff} & \exists i \in \mathbb N, (f^i\Sat\psi \land \forall j  \in [i],  f^j \Sat \phi) \\
f\Sat \Release{\phi}{\psi}& {\rm iff} & \forall i \in\mathbb N, f^i \Sat \psi \textrm{ or }
  \exists i\in\mathbb N,  (f^i\Sat\phi \land \forall j \in [i+1],  f^j \Sat \psi),
\end{array}
\end{gather*}
where $f^i(\cdot) = f(\cdot + i)$. Let $\llbracket \phi \rrbracket$ be the set of signals that satisfy $\phi$.
\end{definition}

Verifying the signals can be done by transforming them to \BAlp~\cite{ltl2buchi2001Gastin}, which can be viewed as the discrete-time version of timed automata in \cref{def:TA}.

\begin{definition}\label{def:buchi}
A \BAls\ $B$ is a tuple
  $\Paren*{\bS,\allowbreak
           \bA,\allowbreak
           \bT,\allowbreak
           \bSi,\allowbreak
           \bF}$
  where
\begin{myitems}
\item \bS\ is a finite non-empty set of {\em states},
\item \bA\ is a finite {\em alphabet},
\item $\bT\subseteq\bS\times\bA\times\bS$ is a {\em transition relation},
\item $\bSi\subseteq\bS$ is a set of {\em initial} states,
\item $\bF\subseteq\bS$ is a set of {\em final} states.
\end{myitems}
We write $s_1 \goesto [a]s_2$ instead of $\Paren{s_1,a,s_2}\in\bT$.
\end{definition}

The \BAus\ $B$ takes an infinite sequence $w \in \bA^{\pmb \omega}$ as an input and accepts it, iff there exists an infinite sequence of states $\rho \in \bS^{\pmb \omega}$ such that
\begin{inparaenum}
\item $\rho_0 \in \bSi$,
\item $\forall n \in \mathbb N, \rho_n \goesto [w_n]\rho_{n+1}$, and
\item $\Inf[\rho]\cap\bF\neq\emptyset$, where $\Inf[\rho]$ is the set of states that appear infinitely often in $\{\rho_n\}_{n=1}^\infty$.
\end{inparaenum}
%
An infinite sequence of states is called a {\em run} of $B$ if it satisfies 1 and 2, and an {\em accepting run} if it satisfies 1,2, and 3.
We define the {\em language} of $B$, denoted by $\Lang[B]$, to be the set of all infinite sequences in $\bA^{\pmb \omega}$ that are accepted by $B$.

Similar to the relation between MITL and timed automata (\cref{thm:mitl2ta}), we introduce the following result on the conversion between LTL and B\"uchi automata.

\begin{lemma}[LTL to B\"uchi automata~\cite{ltl2buchi2001Gastin, ltl-spot-2011-Duret, spot-2004-Duret}]\label{thm:ltl2ba}
For any \LTL\ formula $\phi$, a \BAls\ $B_\phi$ can be constructed such that $\Lang[B_\phi] = \llbracket \phi \rrbracket $, \ie, the set of infinite words that satisfy $\phi$ is exactly those that are accepted by $B_\phi$.
\end{lemma}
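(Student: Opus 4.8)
The plan is to carry out the classical LTL-to-B\"uchi translation via the tableau (closure) construction, followed by degeneralization; the full construction and its correctness are in \cite{ltl2buchi2001Gastin,ltl-spot-2011-Duret,spot-2004-Duret}, and the argument runs in close analogy with the one behind \cref{thm:mitl2ta}. Since the syntax of \cref{def:iLTL-syn} is already in negation normal form (negation having been pushed into the atomic inequalities), no preprocessing is needed. First I would form the \emph{closure} $\CL[\phi]$, the set of all subformulas of $\phi$ (including $\phi$ itself), whose size is linear in $\Size{\phi}$. The states of the automaton will be the \emph{atoms}: subsets $B\subseteq\CL[\phi]$ that are locally consistent --- $\top\in B$, $\bot\notin B$; $\psi_1\land\psi_2\in B$ iff $\psi_1\in B$ and $\psi_2\in B$; $\psi_1\lor\psi_2\in B$ iff $\psi_1\in B$ or $\psi_2\in B$; and the one-step fixpoint unfoldings of $\Next{}$, $\Until{}{}$, and $\Release{}{}$ dictated by \cref{def:iLTL-sem} hold ``here'' (for instance $\Release{\psi_1}{\psi_2}\in B$ forces $\psi_2\in B$). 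The alphabet is $\bA=2^\AP$, and a transition $B\goesto[a]B'$ is present iff $a$ is consistent with the atomic inequalities in $B$ and the one-step obligations are passed on correctly: $\Next{\psi}\in B$ iff $\psi\in B'$; $\Until{\psi_1}{\psi_2}\in B$ iff its ``discharge now'' disjunct holds in $B$, or its ``pending'' disjunct holds in $B$ together with $\Until{\psi_1}{\psi_2}\in B'$; and symmetrically for $\Release{\psi_1}{\psi_2}$ (taking care to match the precise off-by-one conventions of \cref{def:iLTL-sem}). Initial states are the atoms containing $\phi$. Acceptance is \emph{generalized} B\"uchi: for each $\Until{\psi_1}{\psi_2}\in\CL[\phi]$ take $F_{\Until{\psi_1}{\psi_2}}$ to be the atoms $B$ with $\Until{\psi_1}{\psi_2}\notin B$ or $\psi_2\in B$, and require every such set to be visited infinitely often. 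Finally I would apply the standard counter-based degeneralization to turn this generalized automaton into an ordinary \Buchi\ automaton $B_\phi$ recognizing the same language (of size singly exponential in $\Size{\phi}$).

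Correctness then amounts to two inclusions, argued in the usual way. For $\llbracket\phi\rrbracket\subseteq\Lang[B_\phi]$, given a word $w$ with $w\Sat\phi$, I would take the canonical run $\rho_i=\set{\psi\in\CL[\phi]}{w^i\Sat\psi}$ and verify by a structural induction on $\psi$, using the fixpoint characterizations of $\Until{}{}$ and $\Release{}{}$, that every $\rho_i$ is an atom, that $\rho_i\goesto[w_i]\rho_{i+1}$, and that $\phi\in\rho_0$; moreover, since an until-formula true at some position has its right-hand side become true at a later position, each $F_{\Until{\psi_1}{\psi_2}}$ is visited infinitely often, so $\rho$ is accepting. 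For $\Lang[B_\phi]\subseteq\llbracket\phi\rrbracket$, given an accepting run $\rho$ over $w$, I would prove by structural induction on $\psi\in\CL[\phi]$ that $\psi\in\rho_i$ implies $w^i\Sat\psi$ for every $i$: the Boolean and $\Next{}$ cases are immediate from the transition rules; the $\Release{}{}$ case is the standard greatest-fixpoint unfolding argument, which needs no acceptance set because it is already discharged by the local-consistency rules; and the $\Until{}{}$ case is where the fairness condition does the work, since repeated unfolding keeps $\Until{\psi_1}{\psi_2}$ in the visited atoms until $\psi_2$ finally appears, while $\Inf[\rho]\cap F_{\Until{\psi_1}{\psi_2}}\neq\emptyset$ forbids it from remaining there forever. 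Specializing to $i=0$ and using $\phi\in\rho_0$ yields $w\Sat\phi$.

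The hard part is not conceptual depth but precision: setting up the local-consistency and transition rules so that they encode \cref{def:iLTL-sem} \emph{exactly} --- in particular handling $\Release{}{}$, whose greatest-fixpoint nature means it must \emph{not} be given its own acceptance set, and matching the off-by-one conventions of the semantics used in this paper --- and then carrying out the $\Until{}{}$ case of the soundness direction, i.e.\ tying the infinitary \Buchi\ acceptance condition to the semantic ``eventually''. Since all of this is completely standard, for the paper I would simply cite \cite{ltl2buchi2001Gastin,ltl-spot-2011-Duret,spot-2004-Duret} for the detailed construction and its proof, and note that in practice the translation is delegated to the \texttt{Spot} library.
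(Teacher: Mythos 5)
The paper gives no proof of this lemma at all --- it is stated as a known classical result and delegated entirely to the cited literature --- and your tableau/closure construction with generalized B\"uchi acceptance and counter-based degeneralization is the standard argument established in exactly those references, so your proposal is correct and consistent with the paper's treatment. The one detail you rightly flag is worth emphasizing: the paper's semantics of Until and Release in \cref{def:iLTL-sem} quantify the left argument over positions $[i]=\{1,\dots,i\}$ rather than $\{0,\dots,i-1\}$, so the local one-step unfolding rules in the atom construction must be shifted accordingly before the usual structural induction in both directions goes through.
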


\subsection{Model reduction}

The model reduction for the discrete-time stochastic hybrid systems is similar to that for the continuous-time ones discussed in \cref{subsec:reduce_dynamics,subsec:reduce_mitl,subsec:error}, following the three steps of (i) reducing the dynamics by partitioning the state space, (ii) reducing the temporal logic specifications accordingly, and (iii) estimating the model reduction error.

\subsubsection{Reducing the Dynamics}
For a discrete-time stochastic hybrid system, we can reduce it to a finite-state Markov chain by the set-oriented method~\cite{dellnitz_approximation_1999}
which can be viewed as a discrete-time variation of 
the Mori-Zwanzig method~\cite{chorin_optimal_2000}.
Similar to~\cref{sec:modelreduction}, let $S = \{s_1,s_2,\ldots,s_n\}$ be a partition of the continuous state space $\mathbb{X}$, and $P, R$ be the corresponding projection and injection operators as given by~\eqref{eq:projection}-\eqref{eq:injection2}.
As shown in~\cref{fig:galerkin1} and~\cref{thm:discrete_model_reduction}, they induce a projection from the Markov kernel $T: m(\mathbb{X}) \rightarrow m(\mathbb{X})$ to a Markov kernel $T_r: m(S) \rightarrow m(S)$ by
\begin{equation} \label{eq:gp1}
T_r = P T R.
\end{equation}
For multiple steps, the diagram for projection is shown by the non-commutative diagram in~\cref{fig:galerkin}.

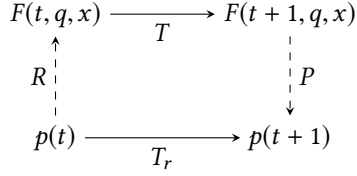
\begin{figure}[t]
\centering
\begin{tikzpicture}
  \matrix (m) [matrix of math nodes,row sep=3em,column sep=4em,minimum width=2em] {
     F(t,q,x) & F(t+1,q,x) \\
     p(t) & p(t+1) \\};
  \path[-stealth]
    (m-2-1) edge[dashed] node [left] {$R$} (m-1-1)
    (m-1-1) edge node [below] {$T$}  (m-1-2)
    (m-2-1.east|-m-2-2) edge node [below] {$T_r$} (m-2-2)
    (m-1-2) edge[dashed] node [right] {$P$} (m-2-2);
\end{tikzpicture}
\caption{Diagram for single-step reduction}  \label{fig:galerkin1}
\end{figure}

\begin{theorem} \label{thm:discrete_model_reduction}
Let $S = \{s_1,\ldots,s_n\}$ be a measurable partition of the state space $\mathbb{X}$. Then the discrete-time stochastic hybrid system reduces to a CTMC $(T_r,p_0)$ by
\begin{equation*}
p_0(i) = \int_{s_i} F(0,q,x) \d x, \quad T_r(i,j) = \int_{s_i} \int_{s_j} T(q',x',q,x) \d x' \d x.
\end{equation*}
\end{theorem}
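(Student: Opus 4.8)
The plan is to unfold the operator identity $T_r = P T R$ from \eqref{eq:gp1} using the explicit forms of the injection $R$ (from \eqref{eq:injection1}--\eqref{eq:injection2}), the projection $P$ (from \eqref{eq:projection}), and the transition function $T$ regarded as the integral operator $(TF)(q',x') = \sum_{q\in\mathcal{Q}}\int_{\mathbf{A}_q} T(q',x',q,x)\,F(q,x)\,\d x$ on $m(\mathbb{X})$, and then to read off the matrix entries. First I would push the $i$-th coordinate vector $e_i\in m(S)$ through the composition: $R e_i$ is the uniform density $\mathbf{U}_{s_i}$ on the cell $s_i$, equivalently $\mathbf{1}_{s_i}/\mu(s_i)$ (throughout, integration over $s_i$ silently fixes the mode to the unique $q$ with $s_i\subset\{q\}\times\mathbf{A}_q$); applying $T$ yields the density $(q',x')\mapsto \mu(s_i)^{-1}\int_{s_i} T(q',x',q,x)\,\d x$; and applying $P$ integrates the result over $s_j$. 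Hence $T_r(i,j) = \mu(s_i)^{-1}\int_{s_i}\int_{s_j} T(q',x',q,x)\,\d x'\,\d x$, which is the displayed kernel up to the normalizing measure $\mu(s_i)$ of the source cell, and by \eqref{eq:projection} one has directly $p_0 = P F(0,\cdot,\cdot)$ with entries $p_0(i) = \int_{s_i} F(0,q,x)\,\d x$. The interchanges of the finite sum over $\mathcal{Q}$ with the integrals are justified by Tonelli, since $T$ and $F(0,\cdot,\cdot)$ are non-negative and integrable.

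Next I would check that $(T_r,p_0)$ is a genuine finite-state Markov chain. Non-negativity of all entries is immediate from non-negativity of $T$ and of $F(0,\cdot,\cdot)$. For stochasticity it is cleanest to argue at the operator level: $R$ preserves total $L_1$-mass because each $\mathbf{U}_{s_j}$ is a probability density, $P$ preserves it because the $s_j$ partition $\mathbb{X}$, and $T$ preserves it by the normalization $\sum_{q'\in\mathcal{Q}}\int_{\mathbf{A}_{q'}} T(q',x',q,x)\,\d x' = 1$ (with the dummy ranging over the destination); therefore $T_r = P T R$ maps $m(S)$ into $m(S)$, i.e. it is row-stochastic, and $p_0 = P F(0,\cdot,\cdot)\in m(S)$. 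Equivalently, unfolding directly, $\sum_j T_r(i,j) = \mu(s_i)^{-1}\int_{s_i}\big(\sum_{q'}\int_{\mathbf{A}_{q'}} T(q',x',q,x)\,\d x'\big)\,\d x = \mu(s_i)^{-1}\int_{s_i} 1\,\d x = 1$, using that the cells tile $\bigcup_{q}\{q\}\times\mathbf{A}_q$ up to a null set, and $\sum_i p_0(i) = \int_{\mathbb{X}} F(0,q,x) = 1$.

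The content of the theorem is purely this algebraic unfolding of $T_r = PTR$; in particular, in contrast with \cref{thm:reduction error}, no contractivity or convergence hypothesis enters, so there is no substantial obstacle. The only points demanding care are the bookkeeping of the multi-mode structure (each cell lies in a single mode, so the kernel integrals silently carry fixed source and destination modes $q,q'$), the verification that the partition covers the flow sets up to measure zero so that the normalization of $T$ transfers to the row sums of $T_r$, and keeping track of the normalizing factor $\mu(s_i)$ — the one place where a literal computation of $PTR$ differs from the expression as displayed.
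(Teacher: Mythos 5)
Your proposal is correct and follows exactly the route the paper intends: the paper states this theorem without an explicit proof, treating it as an immediate unfolding of the definition $T_r = P T R$ in \eqref{eq:gp1}, which is precisely what you carry out, together with the (implicit) verification of non-negativity and stochasticity of $(T_r,p_0)$. Your remark about the normalizing factor is a correct catch rather than a gap: a literal computation of $PTR$ gives $T_r(i,j)=\mu(s_i)^{-1}\int_{s_i}\int_{s_j}T(q',x',q,x)\,\d x'\,\d x$, so the paper's displayed entries coincide with $PTR$ only for unit-measure cells (otherwise the rows sum to $\mu(s_i)$ rather than $1$), and likewise the reduced model should be read as a discrete-time Markov chain despite the statement saying ``CTMC''.
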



\begin{figure}[t]
\centering
\begin{tikzpicture}
  \matrix (m) [matrix of math nodes,row sep=3em,column sep=4em,minimum width=2em] {
     F(0,q,x) & F(1,q,x) & F(t-1,q,x) & F(t,q,x) \\
     p(0) & p(1) & p(t-1) & p(t) \\};
  \path[-stealth]
    (m-2-1) edge node [left] {$R$} (m-1-1)
    (m-1-1) edge node [below] {$T$} (m-1-2)
    (m-2-1.east|-m-2-2) edge [dashed] node [below] {$T_r$} (m-2-2)
    (m-1-3) edge node [below] {$T$} (m-1-4)
    (m-2-3.east|-m-2-4)  edge [dashed] node [below] {$T_r$} (m-2-4)
    (m-1-4) edge node [right] {$P$} (m-2-4)
    (m-1-2) edge [dotted] node [below] {} (m-1-3)
    (m-2-2) edge [dotted] node [below] {} (m-2-3) ;
\end{tikzpicture}
\caption{Diagram for multiple-step reduction} \label{fig:galerkin}
\end{figure}
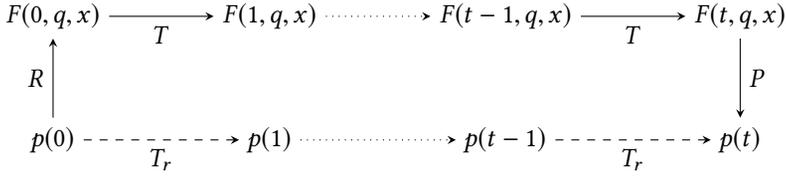

\subsubsection{Reduced iLTL}

Similar to \cref{subsec:reduce_mitl}, 
an observable $y(t)$ from \eqref{eq:obs}
for the discrete stochastic hybrid system
can be reduced approximately to an observable $y'(t)$ 
on the discrete-time Markov chain
by~\eqref{eq:rap}. 
Again, we make the assumption~\eqref{eq:ass},
as we did for the continuous-time case.
Initially, the discrepancy between 
$y(0)$ and $y'(0)$ and is given by~\eqref{riltl}.
\begin{lemma} \label{riltl}
For any $F(q,x) \in m(\mathbb{X})$ and projection operator $P$, we have
\begin{align*}
& y(0) > b + \delta_P \nm{F}_\infty \Longrightarrow y'(0) > b, \quad y'(0) > b + \delta_P \nm{F}_\infty \Longrightarrow  y(0) > b, \\
& y(0) < b - \delta_P \nm{F}_\infty \Longrightarrow y'(0) < b, \quad y'(0) < b - \delta_P \nm{F}_\infty \Longrightarrow  y(0) < b,
\end{align*}
where
\begin{equation} \label{eq:erp}
\delta_P = \tv{F(0,q,x) - R P F(0,q,x)},
\end{equation}
is the error of projection operator $P$ in total variance, 
where $\tv{\cdot}$ is the total variation distance.
\end{lemma}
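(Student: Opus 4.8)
The plan is to reduce all four implications to a single two–sided estimate and then prove that estimate by exactly the computation used in the continuous–time case. Concretely, every assertion in the lemma follows at once from
\[
  \bigl\lvert y(0) - y'(0)\bigr\rvert \;\le\; \delta_P\,\nm{F}_\infty .
\]
For example, if $y(0) > b + \delta_P\nm{F}_\infty$, then $y'(0) \ge y(0) - \delta_P\nm{F}_\infty > b$; the other three statements are obtained by reversing the inequality and/or interchanging the roles of $y$ and $y'$, which is legitimate since the bound is symmetric in $y,y'$. So the whole content of \cref{riltl} is that estimate.

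To prove it, I would first rewrite the discrepancy exactly as in the derivation leading to \eqref{eq:projection error}. By the definition of the reduced observable $y'$ (the discrete analogue of \eqref{eq:rap}), $y(0) - y'(0)$ is the pairing of the weight $\gamma$ with the signed density $F(0,q,x) - R P F(0,q,x)$, i.e. $y(0)-y'(0) = \sum_{q\in\mathcal{Q}}\int_{\mathbf{A}_q}\gamma(q,x)\bigl(F(0,q,x) - R P F(0,q,x)\bigr)\,\d x$. Applying H\"older's inequality in the form $\lvert\int \gamma\,\lambda\rvert \le \nm{\gamma}_\infty\,\lVert\lambda\rVert_{L^1}$ to $\lambda = F - R P F$, and then identifying $\lVert F - R P F\rVert_{L^1}$ with the total–variation distance $\delta_P = \tv{F - R P F}$ of \eqref{eq:erp} (up to the fixed factor in the convention for $\tv{\cdot}$), together with the paper's normalization of the weight $\gamma$, yields the displayed bound. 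It is worth recording the structural reason the error is genuinely small: $P$ stores the total mass of each partition cell and $R$ redistributes it uniformly, so $F - R P F$ has zero mass on every cell $s_i$; this makes $RP$ behave like a conditional expectation onto the partition and, as in the remark after \eqref{eq:erp}, gives $\delta_P\to 0$ under refinement.

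The only delicate point is bookkeeping rather than analysis: matching the normalization factor between $\lVert\cdot\rVert_{L^1}$ and $\tv{\cdot}$ and tracking the normalization of $\gamma$, so that the right–hand side comes out as precisely $\delta_P\nm{F}_\infty$ and not merely a constant multiple of it. Once $y(0)-y'(0)$ has been rewritten as an integral against $F - R P F$, nothing beyond an elementary H\"older estimate and a rearrangement of inequalities is needed, so I expect no real obstacle here — the proof is essentially the discrete-time transcription of the argument accompanying \eqref{eq:projection error}.
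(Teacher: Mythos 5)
The paper itself states \cref{riltl} without proof, so there is no official argument to match your proposal against; the closest analogue is the continuous-time treatment around \eqref{eq:projection error}, where the discrepancy is kept as the exact pairing $\Delta_y=\vert\sum_q\int\gamma\,(F-RPF)\vert$ rather than factored into a product of norms. Your reduction of the four implications to the single two-sided estimate $\vert y(0)-y'(0)\vert\le\delta_P\nm{F}_\infty$ is fine, and so is the rewriting $y(0)-y'(0)=\sum_{q}\int_{\mathbf{A}_q}\gamma\,(F-RPF)\,\d x$. The gap is in the last step. H\"older applied the way you apply it gives $\vert y(0)-y'(0)\vert\le\nm{\gamma}_\infty\,\nm{F-RPF}_{1}$, i.e.\ (up to the factor $1/2$ hidden in the total-variation convention) a bound of the form $\delta_P\,\nm{\gamma}_\infty$ --- the sup norm of the \emph{weight}, not of the density. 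In the lemma, however, $F(q,x)\in m(\mathbb{X})$ is the distribution, so $\nm{F}_\infty$ is the sup norm of the density, and nothing in the paper normalizes $\gamma$ so that $\nm{\gamma}_\infty$ can be traded for $\nm{F}_\infty$ (the only normalization of that kind, scaling $\mR$ so its largest entry is $1$, appears in \cref{sec:mc} for the reduced chain's atomic propositions, not for $\gamma$). So the estimate you prove is not the estimate the lemma asserts, and the "bookkeeping" you defer is exactly where the statement and your argument part ways.

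The way to actually produce an $\nm{F}_\infty$ factor is the dual split of the same pairing, using the structural fact you yourself record: $RP$ is the conditional expectation onto the partition, hence self-adjoint and idempotent, so $\sum_q\int\gamma\,(F-RPF)\,\d x=\sum_q\int(\gamma-RP\gamma)\,F\,\d x$, and H\"older now gives $\vert y(0)-y'(0)\vert\le\nm{\gamma-RP\gamma}_{1}\,\nm{F}_\infty$. This matches the shape of the lemma, but then the $L^1$/TV error that multiplies $\nm{F}_\infty$ is the projection error of the weight function $\gamma$, whereas \eqref{eq:erp} as printed defines $\delta_P$ as $\tv{F-RPF}$, the projection error of the distribution; note also that under assumption \eqref{eq:ass} ($\gamma=RP\gamma$) the discrepancy is exactly zero, so the lemma is only meaningful when that assumption is relaxed. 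In short: either you must switch the H\"older pairing as above and read $\delta_P$ as the projection error relevant to the observable, or you must flag that the bound you can prove is $\delta_P\nm{\gamma}_\infty$; as written, your proof does not establish the inequality with the constant $\delta_P\nm{F}_\infty$ claimed in \cref{riltl}.
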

%

\subsubsection{Reduction Error Estimation}

To compute the discrepancy between $y(t)$ and $y'(t)$ for any $t \in \Nat$, we first note that the projection operator $P$ is contractive.

\begin{lemma} \label{pcontra}
Let $\mathbb{S} = \{s_1,\ldots,s_n\}$ be a measurable partition of $\mathbb{X}$ and $P$ be the projection operator associated with $\mathbb{S}$. For any $F(q,x), F'(q,x) \in m(\mathbb{X})$,
\begin{equation*}
\tv{P F(q,x) - P F'(q,x)} \leq \tv{F(q,x) - F'(q,x)}.
\end{equation*}
\end{lemma}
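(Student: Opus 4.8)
The plan is to reduce this to a statement about measures on the partition cells and exploit the fact that the projection $P$ "coarsens" a distribution by averaging over each cell, and averaging cannot increase total variation. Concretely, write $H(q,x) = F(q,x) - F'(q,x)$, which is a signed measure (integrable function) on $\mathbb{X}$ with $\int H = 0$. Since $P$ is linear and extends to $L_1$ functions (as noted after \eqref{eq:injection2}), we have $PF - PF' = PH$, and $\tv{\cdot}$ for an $L_1$ density is (up to the usual factor) the $L_1$ norm $\LOne{\cdot}$; so it suffices to show $\LOne{PH} \leq \LOne{H}$.

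First I would unfold the definition of $P$ from \eqref{eq:projection}: the $j$-th component of $PH$ is $\int_{s_j} H(q,x)\,\d x$, and the reduced distribution lives on the finite set $\mathbb{S}$, so its $\ell_1$ norm is $\sum_{j=1}^n \bigl| \int_{s_j} H(q,x)\,\d x \bigr|$. Then I would apply the triangle inequality for integrals cell-by-cell, $\bigl| \int_{s_j} H(q,x)\,\d x \bigr| \leq \int_{s_j} |H(q,x)|\,\d x$, and sum over $j$. Because $\{s_1,\ldots,s_n\}$ is a partition of $\mathbb{X}$ (each $s_i \subseteq \{q\}\times\mathbf{A}_q$ for some $q$, and they are disjoint and cover the relevant part of the state space), the sum $\sum_{j=1}^n \int_{s_j} |H(q,x)|\,\d x$ is exactly $\int_{\mathbb{X}} |H(q,x)|\,\d x = \LOne{H}$. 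Chaining these gives $\LOne{PH} \leq \LOne{H}$, i.e. $\tv{PF - PF'} \leq \tv{F - F'}$.

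There is no real obstacle here — the only things to be careful about are (i) making sure the normalization convention relating $\tv{\cdot}$ and $\LOne{\cdot}$ is applied consistently on both sides (it is just a constant factor, so it cancels), and (ii) noting that $P$ really is well-defined and linear on signed/$L_1$ densities, not merely on probability distributions, which the paper has already asserted. One subtlety worth a sentence: if the partition does not literally cover all of $\mathbb{X}$ but only the flow sets $\{q\}\times\mathbf{A}_q$ (which is where all the mass sits, since the boundary is absorbing), then the final equality becomes an inequality $\sum_j \int_{s_j}|H| \leq \int_{\mathbb{X}}|H|$, which is still in the right direction and suffices. The main "insight," such as it is, is simply that $P$ is a linear contraction in $L_1$ because conditional-expectation-type averaging operators are $L_1$ contractions; everything else is the triangle inequality.
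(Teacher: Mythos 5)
Your proof is correct. Note that the paper itself states Lemma~\ref{pcontra} without giving any proof, so there is nothing to compare against; your argument is exactly the standard one that fills this gap: by linearity $PF-PF'=P(F-F')$, the $j$-th component of $P(F-F')$ is $\int_{s_j}(F-F')\,\d x$, and the triangle inequality for integrals applied cell-by-cell gives $\sum_{j}\bigl|\int_{s_j}(F-F')\,\d x\bigr|\leq\sum_{j}\int_{s_j}|F-F'|\,\d x\leq\int_{\mathbb{X}}|F-F'|\,\d x$, i.e.\ $P$ is an $L_1$ (hence total-variation) contraction. Your two side remarks — that the normalization constant relating $\tv{\cdot}$ to the $L_1$/$\ell_1$ norm appears on both sides and cancels, and that if the cells only cover the flow sets the final equality weakens to an inequality in the harmless direction — are the only points of care, and you handled both.
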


As shown in the non-commutative diagram in~\cref{fig:galerkin}, 
the discrepancy for any $t \in \Nat$ can be written as
\begin{equation*}
\ner_t = \tv{P T^{(t)} F(0,q,x) - T_r^{(t)} P F(0,q,x)} = \tv{P T^{(t)} F(0,q,x) - P (T R P)^{(t)} F(0,q,x)}.
\end{equation*}
So, its error bound can be derived as follows.

\begin{theorem}
Given a discrete-time stochastic hybrid system and a projection operator $P$, the $t$-step ($t \geq 1$) error of projection
\begin{equation}
\ner_t \leq \sum_{i=0}^{t-1} \delta_P((T R P)^{(i)} F(0,q,x)) ,
\end{equation}
where $\delta_P$ is given in~\eqref{eq:erp}.
\end{theorem}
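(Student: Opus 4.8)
The plan is to bound $\ner_t$ by a telescoping decomposition of the difference between the exact $t$-step evolution $T^{(t)}F(0,q,x)$ and the lifted reduced evolution $(TRP)^{(t)}F(0,q,x)$ --- recall from the display just before the theorem that $\ner_t = \tv{P T^{(t)}F(0,q,x) - P (TRP)^{(t)}F(0,q,x)}$ --- and then to estimate each term using the nonexpansiveness, in total variation, of the Markov operator $T$ and of the projection $P$.

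First I would set, for $i = 0, 1, \dots, t$,
\[
H_i \;=\; T^{(t-i)}\,(TRP)^{(i)}\,F(0,q,x),
\]
so that $H_0 = T^{(t)}F(0,q,x)$ is the exact evolution and $H_t = (TRP)^{(t)}F(0,q,x)$ is the lifted reduced one. A short computation using $T - TRP = T(I - RP)$ gives
\[
H_i - H_{i+1} \;=\; T^{(t-i)}\,(I - RP)\,(TRP)^{(i)}\,F(0,q,x),
\]
and summing this over $i = 0, \dots, t-1$ and applying the linear operator $P$ yields
\[
P T^{(t)}F(0,q,x) - P (TRP)^{(t)}F(0,q,x) \;=\; \sum_{i=0}^{t-1} P\,T^{(t-i)}\,(I - RP)\,(TRP)^{(i)}\,F(0,q,x).
\]

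To finish, I would estimate each summand. The operator $T$, being a stochastic-kernel (Markov) operator, is nonexpansive in the $L_1$/total-variation norm, and $P$ is nonexpansive by \cref{pcontra}, extended from probability distributions to mass-zero signed measures by linearity. Moreover $(TRP)^{(i)}F(0,q,x)$ is a genuine probability distribution (since $T$, $R$, and $P$ all map distributions to distributions), so $(I-RP)$ sends it to a mass-zero signed measure whose total-variation norm is, by the definition~\eqref{eq:erp} of $\delta_P$, exactly $\delta_P\big((TRP)^{(i)}F(0,q,x)\big)$. Therefore
\[
\tv{ P\,T^{(t-i)}\,(I-RP)\,(TRP)^{(i)}F(0,q,x) } \;\le\; \tv{ (I-RP)\,(TRP)^{(i)}F(0,q,x) } \;=\; \delta_P\big((TRP)^{(i)}F(0,q,x)\big),
\]
and the triangle inequality applied to the telescoped identity gives $\ner_t \le \sum_{i=0}^{t-1} \delta_P\big((TRP)^{(i)}F(0,q,x)\big)$, as claimed.

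The one point that needs care is the bookkeeping of the telescoping: the powers of $T$ and of $TRP$ must be aligned so that the residual error at step $i$ is evaluated at the \emph{reduced} iterate $(TRP)^{(i)}F(0,q,x)$ and not at the exact iterate $T^{(i)}F(0,q,x)$; peeling the exact dynamics off from the left (equivalently, growing the reduced block from the right) is exactly what produces the right-hand side in the statement. Everything else is routine --- and, in contrast with \cref{thm:reduction error} for the continuous-time case, this bound does not invoke the $\alpha$-contractivity assumption~\eqref{eq:dtshs_contractive}, which enters only afterwards when one sums the resulting series to obtain an estimate uniform in $t$.
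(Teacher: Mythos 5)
Your proof is correct and follows essentially the same route as the paper's: the same telescoping decomposition between the exact iterates $T^{(t-i)}(TRP)^{(i)}F$ and the next reduced step, bounded term by term using nonexpansiveness of $T$ and of $P$ in total variation together with the definition of $\delta_P$, and with $\alpha$-contractivity correctly deferred to the uniform-in-$t$ bound of \cref{thm:eb2}. The only cosmetic difference is that the paper first drops the outer $P$ via \cref{pcontra} and then telescopes, whereas you telescope first and handle $P$ inside each summand; your explicit remark on extending \cref{pcontra} to mass-zero signed measures is a nice bit of added care.
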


\begin{proof}
For $t = 1$, we have,
\begin{equation*}
\begin{split}
\ner_1 & = \tv{P T F(0,q,x) - P (T R P) F(0,q,x)} \leq \tv{T F(0,q,x) - T R P F(0,q,x)}
\\ & \leq \tv{F(0,q,x) - R P F(0,q,x)} = \delta_P(F(0,q,x)).
\end{split}
\end{equation*}
For $t > 1$, with $F$ denoting $F(0,q,x)$, we have
\begin{equation*} \label{long}
\begin{split}
\ner_t & = \tv{P T^{(t)} F - P (T R P)^{(t)} F} \leq \tv{T^{(t)} F - (T R P)^{(t)} F} \leq \tv{T^{(t)} F - T^{(t-1)} (T R P) F} \\
 \quad & + \tv{T^{(t-1)} (T R P) F - T^{(t-2)} (T R P)^{(2)} F} \ldots + \tv{T (T R P)^{(t-1)} F - (T R P)^{(t)} F} \leq \sum_{i=0}^{t-1} \delta_P((T R P)^{(i)} F).
\end{split}
\end{equation*}

\end{proof}

When $T$ is strictly contractive, we can derive a uniform error bound
for $\ner_t$ as follows.

\begin{theorem} \label{thm:eb2}
Given a discrete-time stochastic hybrid system, a projection operator $P$ and the corresponding injection $R$, if the Markov kernel $T$ is strictly contractive by factor $\alpha \in (0,1)$, then the $t$-step ($t \geq 1$) error of projection
\begin{equation}
\ner_t \leq \frac{\delta_P}{1-\alpha},
\end{equation}
where
\begin{equation} \label{deltapp}
\delta_P = \sup_{i \in \mathbb N} \delta_P((T R P)^{(i)} F(0,q,x)) .
\end{equation}
\end{theorem}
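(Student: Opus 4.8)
The idea is to reuse the telescoping decomposition from the proof of the preceding theorem, but now to exploit the strict contractivity of $T$ so that the resulting sum is a convergent geometric series rather than a linearly growing one. First I would invoke \cref{pcontra} to discard the outer projection, obtaining $\ner_t \leq \tv{T^{(t)}F - (T R P)^{(t)}F}$ where $F := F(0,q,x)$. Writing $A_k := T^{(t-k)}(T R P)^{(k)}F$, so that $A_0 = T^{(t)}F$ and $A_t = (T R P)^{(t)}F$, telescoping gives $T^{(t)}F - (T R P)^{(t)}F = \sum_{k=0}^{t-1}(A_k - A_{k+1})$. Using $(T R P)^{(k+1)}F = T R P\,\big((T R P)^{(k)}F\big)$, each summand simplifies to
\begin{equation*}
A_k - A_{k+1} = T^{(t-k)} g_k, \qquad g_k := (T R P)^{(k)}F - R P\,(T R P)^{(k)}F .
\end{equation*}

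The crucial point is that $g_k$ is the difference of the two probability densities $(T R P)^{(k)}F$ and $R P\,(T R P)^{(k)}F$ (both are distributions because $P$, $R$, and $T$ each preserve total mass), so $\tv{g_k} = \delta_P\big((T R P)^{(k)}F\big) \leq \delta_P$ by the definition~\eqref{deltapp}. Moreover $T g_k$ is again a difference of two probability densities, so strict contractivity may be applied repeatedly, yielding $\tv{T^{(t-k)} g_k} \leq \alpha^{\,t-k}\,\tv{g_k} \leq \alpha^{\,t-k}\delta_P$ for every $k \in \{0,\dots,t-1\}$. Summing,
\begin{equation*}
\ner_t \leq \sum_{k=0}^{t-1}\alpha^{\,t-k}\delta_P = \delta_P\sum_{j=1}^{t}\alpha^{\,j} \leq \frac{\alpha\,\delta_P}{1-\alpha} \leq \frac{\delta_P}{1-\alpha},
\end{equation*}
which is the stated bound (and in fact marginally sharper).

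The telescoping algebra and the geometric summation are routine; the points I would be careful about are at the interface between the hypotheses. The contractivity assumption~\eqref{eq:dtshs_contractive} is phrased in the $L_1$ norm on densities, while $\delta_P$, \cref{pcontra}, and $\ner_t$ use the total variation metric; since the two norms differ only by a fixed multiplicative constant and contractivity is homogeneous, the constant cancels and the estimate is unaffected, but I would say so explicitly. The only genuinely substantive observation beyond bookkeeping is that $T$ is a Markov operator, hence maps distributions to distributions, which is what guarantees that each iterate $(T R P)^{(k)}F$ and its projection are legitimate probability densities and that the iterated use of strict contractivity on $g_k$ is valid — I expect this to be the main thing a careful reader would want spelled out, though it is not deep.
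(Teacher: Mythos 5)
Your proof is correct and follows essentially the same route as the paper's: the same telescoping decomposition of $T^{(t)}F-(TRP)^{(t)}F$ used for the preceding theorem, with strict contractivity applied to each telescoped difference to turn the sum into a geometric series bounded by $\delta_P/(1-\alpha)$. Your bookkeeping of the factors $T^{(t-k)}g_k$ is just a more explicit version of the paper's estimate $(1+\alpha+\cdots+\alpha^t)\delta_P$, and yields the marginally sharper constant $\alpha/(1-\alpha)$, which is immaterial to the stated bound.
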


\begin{proof}
For $t = 1$, clearly $\ner_t = \delta_P$.
For $t \geq 2$, by~\eqref{long} and with $F$ denoting $F(0,q,x)$, we have
\begin{equation}
\begin{split}
\ner_t  & \leq \tv{T^{(t)} F - T^{(t-1)} (T R P) F} + \tv{T^{(t-1)} (T R P) F - T^{(t-2)} (T R P)^{(2)} F} 
\\ & \quad + \ldots + \tv{T (T R P)^{(t-1)} F - (T R P)^{(t)} F}  \leq (1 + \alpha + \ldots + \alpha^t) \delta_P \leq \frac{\delta_P}{1-\alpha}.
\end{split}
\end{equation}
\end{proof}

By combining~\cref{riltl} and~\cref{thm:eb2}, we can derive the following theorem on the relationship between linear inequalities on the original Markov process and linear inequalities on the reduced Markov process.

\begin{theorem} \label{relation}
Given a measurable partition $\mathbb{S} = \{s_1,\ldots,s_n\}$ and the corresponding projection operator $P$, a discrete-time stochastic hybrid system and its reduction $(T_r,p_0)$ satisfies the equations:
\begin{align}
& y (t) > b + \frac{\delta_P \nm{F}_\infty}{1-\alpha}  \Longrightarrow y' (t) > b, \quad y' (t) > b + \frac{\delta_P \nm{F}_\infty}{1-\alpha}  \Longrightarrow  y (t) > b, \\
& y (t) < b - \frac{\delta_P \nm{F}_\infty}{1-\alpha}  \Longrightarrow y' (t) < b, \quad y' (t) < b - \frac{\delta_P \nm{F}_\infty}{1-\alpha}  \Longrightarrow  y (t) < b,
\end{align}
for any $t \geq 0$, where $\delta_p$ is given by~\eqref{deltapp} respectively.
\end{theorem}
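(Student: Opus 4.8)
The plan is to reduce the whole statement to one uniform estimate,
\[
  \big|\,y(t)-y'(t)\,\big|\ \le\ \frac{\delta_P\,\nm{F}_\infty}{1-\alpha}\qquad\text{for every }t\ge 0,
\]
after which the four implications are immediate: e.g.\ $y(t)>b+\tfrac{\delta_P\nm{F}_\infty}{1-\alpha}$ forces $y'(t)\ge y(t)-\tfrac{\delta_P\nm{F}_\infty}{1-\alpha}>b$, and the remaining three cases are symmetric. This is precisely the discrete-time analogue of how \cref{thm:ctshs_main} was obtained from \cref{thm:reduction error}, so the real work is the master estimate, which I would get by gluing the initial-time bound of \cref{riltl} to the time-accumulation bound of \cref{thm:eb2}.

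First I would use the simple-function assumption~\eqref{eq:ass}: writing $\gamma$ as the constant $c_i$ on the cell $s_i$, integration against $\gamma$ only sees a distribution through its vector of cell masses, so $y(t)=\sum_i c_i\big(PT^{(t)}F(0,q,x)\big)_i$, while by the definition~\eqref{eq:rap} of the reduced observable together with \cref{thm:discrete_model_reduction} we have $y'(t)=\sum_i c_i\big(T_r^{(t)}p_0\big)_i$ --- the \emph{same} linear functional, now evaluated on the reduced distribution $p(t)=T_r^{(t)}p_0$. Hence
\[
  \big|\,y(t)-y'(t)\,\big|\ =\ \Big|\,\sum_i c_i\big(PT^{(t)}F(0,q,x)-T_r^{(t)}p_0\big)_i\,\Big|\ \le\ \nm{F}_\infty\,\tv{PT^{(t)}F(0,q,x)-T_r^{(t)}p_0}\ =\ \nm{F}_\infty\,\ner_t ,
\]
up to the total-variation normalization --- this is exactly the H\"older-type step already behind \cref{riltl}, only carried out at an arbitrary time $t$ rather than at $t=0$.

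It then remains to control $\ner_t$. Using $PR=I$ one has $T_r^{(t)}p_0=P(TRP)^{(t)}F(0,q,x)$, which is the object whose $t$-step projection error is estimated in the previous subsection. For $t\ge 1$ this is \cref{thm:eb2}, giving $\ner_t\le\delta_P/(1-\alpha)$ with $\delta_P$ as in~\eqref{deltapp}; for $t=0$ we simply have $\ner_0=\delta_P\big(F(0,q,x)\big)\le\delta_P\le\delta_P/(1-\alpha)$, so the $t=0$ instance of the master estimate is literally \cref{riltl}. Substituting this into the previous display finishes the proof.

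I expect no deep obstacle here; the statement is essentially a corollary of \cref{riltl} and \cref{thm:eb2}. The points that need care are purely the bookkeeping ones: being consistent about the total-variation versus $L_1$ normalization and about where the factor $\nm{F}_\infty$ enters (exactly as in \cref{riltl}); invoking the identity $T_r^{(t)}p_0=P(TRP)^{(t)}F(0,q,x)$ so that the contraction estimate of \cref{thm:eb2} really applies to the reduced chain's own distribution $p(t)$ and not to an abstract iterate; and covering the endpoint $t=0$ separately via \cref{riltl}, since \cref{thm:eb2} is stated only for $t\ge 1$.
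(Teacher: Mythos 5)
Your proposal is correct and follows essentially the same route as the paper, which proves \cref{relation} simply by combining the H\"older-type step behind \cref{riltl} with the uniform bound $\ner_t\le\delta_P/(1-\alpha)$ of \cref{thm:eb2} (the identity $T_r^{(t)}p_0=P(TRP)^{(t)}F(0,q,x)$ you invoke is already built into the paper's definition of $\ner_t$). Your extra bookkeeping --- carrying the weight-function bound at arbitrary $t$ and treating the $t=0$ endpoint separately --- just makes explicit what the paper leaves implicit.
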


\cref{relation} can be viewed as the discrete-time counterpart of \cref{thm:ctshs_main}.
In \cref{thm:ctshs_main}, the model reduction error is bounded by two term: one for the initial error, and the other for the error accumulated over time.
In \cref{relation}, these two terms are combined into one, due to the difference between the contractivity condition \eqref{eq:dtshs_contractive} and \eqref{eq:cont}.

Following \cref{relation}, to verify an iLTL formula $\phi$ for an $\alpha$-contractive discrete-time stochastic hybrid system introduced in \cref{ssub:ctshs}, we can strengthen $\phi$ to $\psi$ by replacing the atomic propositions according to \cref{relation}.
If $\psi$ holds for the DTMC derived from the discrete-time stochastic hybrid system following the aforementioned model reduction procedure, then $\phi$ holds for the discrete-time stochastic hybrid system.

\subsection{Statistical Model Checking of iLTL} 
\label{sec:statistical_model_checking_of_iltl}

Similar to \cref{sec:mc}, we introduce a statistical model checking procedure for iLTL specifications on the reduced systems.
Again, we denote the atomic proposition $\mY = \sum_{i=1}^n \mR_i \mY_i = \mR \cdot \mY > \mC$ by a pair $\Paren{\mR,\mC}$.
For an iLTL formula $\varphi$ and a discrete-time Markov chain generating a sequence of distributions $w  =  p_0p_1p_2\ldots$, define $u = u_0 u_1 u_2 \ldots$ where $u_t = \Brace{(\mR,\mC) \in \AP_\varphi \mid \mR\cdot p_t>\mC}$ is the set of atomic propositions that are true at time $t$.
Similar to \Cref{sec:mc}, our algorithm in this section has four steps:
\begin{itemize}
\item Construct the \BAlp\ $B_\varphi$ and $B_{\neg\varphi}$.
\item Find a time step $T$ at which $\mY(T)$ is very close to our estimation of the
      invariant distribution.
\item Construct $B_{M,\varphi}$,
\item If $\Lang[B_{M,\varphi}]\cap\Sem{B_\varphi}=\emptyset$ then return \No, 
      if $\Lang[B_{M,\varphi}]\cap\Sem{B_{\neg\varphi}}=\emptyset$ then return \Yes,
      otherwise, return \Unknown. 
\end{itemize}

These steps are similar to their corresponding step in \Cref{sec:mc}.
For example, the first step is carried out using \Cref{thm:ltl2ba}.
Simulation of discrete and continuous Markov chains are different procedures, but they both can be performed efficiently, and that is what we need for the second and third steps.
Similarly, checking emptiness of intersection of \TAlp\ and \BAlp\ are different procedures, but they are both known to be decidable~\cite{buchi}.
The main difference with \Cref{fig:alg-part2-ct} is that 
since in \Cref{thm:ltl2ba} time is discrete, 
to find labels of $B_{M,\varphi}$, we only run one instance of $\ALG_0$ at each step.
\Cref{alg:iLTL} shows the pseudocode for different steps.
Again, similar to \Cref{fig:alg-part2-ct}, \Unknown\ labels are modeled using two locations; 
one labeled by $\Brace{(y\sim c)}$ 
and the other labeled by $\emptyset$. 
However, since the time is discrete for \BAlp, 
there will be no extra transition between these two locations.

Similar to our previous algorithm, 
in addition to 
  a Markov chain $M$,
  iLTL formula $\varphi$, and
  $\mY^*$, an estimation of the invariant distribution $\inv{\mY}$,
\Cref{alg:iLTL} takes
  two error parameters $\alpha,\gamma\in(0,1)$ and 
  two indifference parameters $\delta,\delta'\in(0,1)$.
The parameters $\delta'$ and $\frac{1}{2}\min\Brace{\alpha,\gamma}$ 
are used to find the time bound $T$, 
and the parameters $\delta$, $\frac{1}{2}\alpha$, and $\frac{1}{2}\gamma$ are used to construct labels of
\BAls\ $B_{M,\varphi}$ before reaching step $T$.
We have the following guarantee about the algorithm:
\begin{align*}
  \Prb[\mathtt{res}=\No\ \,  \mid M \Sat \varphi] \leq \alpha, \quad \Prb[\mathtt{res}=\Yes \mid M \nSat\varphi] \leq \alpha,
  \\
  \big(\forall \sigma\oftype B^{\delta}(\mR\cdot\mY), \ \sigma\Sat\varphi\big){\implies}
  \Prb[\mathtt{res}{=}\Unknown] \leq \alpha + \gamma,
  \\
  \big(\forall \sigma\oftype B^{\delta}(\mR\cdot\mY), \ \sigma\nSat\varphi\big){\implies}
  \Prb[\mathtt{res}{=}\Unknown] \leq \alpha + \gamma,
\end{align*}
where $B^\delta(r\cdot p)$ is the tube of discrete functions that are $\delta$-close to $r\cdot p$.

\begin{algorithm}[t]
\KwData{%
  Markov chain $(M,\mY_0)$,
  estimation of invariant distribution $\mY^*$,
  iLTL formula $\varphi$,
  parameters $\alpha$, $\gamma$, $\delta$, $\delta'$}
\KwResult{\Yes, \No, or \Unknown}
\Function{$\mathtt{NumberOfSamplingSteps}()$}{%
  $t\gets1$\\
  $\alpha'\gets\frac{1}{2}\min\Brace{\alpha,\gamma}$\\
  \While{$\ALGClose\Paren*{\mY(t), \mY^*, \frac{1}{2}\alpha', \frac{\delta'}{3}}= \mathtt{failed}$}{%
    $t \gets 2\times t$\\
    $\alpha'\gets\frac{1}{2}\alpha'$
  }
  \Return{t+1}
}
\Function{$\mathtt{LabelFiniteNumberOfSteps}(m \in \mathbb N)$}{%
  \ForAll{$t \in \Brace{0,1,\ldots,m-1}$, $(\mR,\mC) \in \AP$}{%
    $asg\Paren*{t,(\mR,\mC)}\gets\ALG_0(\mR\cdot\mY(t),\mC,\frac{\alpha}{2m\Size*{\AP}},\frac{\gamma}{2m\Size*{\AP}},\frac{\delta}{3})$
  }
  \Return{$asg$}
}
\Function{$\mathtt{AddLabelsOfInvariantDistribution}(m \in \mathbb N, asg \in \mathbb N\times\AP \rightarrow \Brace{\Yes,\No,\Unknown})$}{%
  \ForAll{$t \in \Brace{m,m+1,\ldots}$, $(\mR,\mC) \in \AP$}{%
    \eIf{$\mR\cdot p^*>\mC$}{$asg(t,(\mR,\mC))\gets\Yes$}{$asg(t,(\mR,\mC))\gets\No$}
  }
  \Return{$asg$}
}
\Function{$\mathtt{ModelCheck}$}{%
  $T   \gets \mathtt{NumberOfSamplingSteps}()$\;
  $asg \gets \mathtt{LabelFiniteNumberOfSteps}(T)$\;
  $asg \gets \mathtt{AddLabelsOfInvariantDistribution}(T,asg)$\;
  $\llbracket asg \rrbracket \gets$ the \BAls\ that accepts exactly the set of infinite paths induced by $asg$\\
  \uIf{$\Lang(B_\varphi)      \cap\Lang[\llbracket asg \rrbracket] = \emptyset$}{\Return \No}
  \uIf{$\Lang(B_{\neg\varphi})\cap\Lang[\llbracket asg \rrbracket] = \emptyset$}{\Return \Yes}
  {\Return \Unknown} 
}
\caption{Model checking Markov chains against iLTL formulas}\label{alg:iLTL}
\end{algorithm}

\section{Case Study} \label{sec:case}

We implemented the proposed model reduction and statistical verification algorithm 
on high-dimensional stochastic hybrid systems 
with polynomial dynamics for the continuous states 
to demonstrate the scalability.
In this section we present our experimental results.
Consider a piecewise linear jump system under nonlinear perturbation with the continuous state $x(t) \in \Real^n$ and the discrete state $q(t) \in [m]$ with $m \in \Nat$.
The continuous dynamics is
\begin{equation} \label{eq:ex_linear_system}
	\frac{\d x}{\d t} = (A_{q(t)} + c_{q(t)} \nm{x(t)}_\infty) x(t)
\end{equation}
where $A_i \in \Real^{n \times n}$ is Hurwitz and $c_i > 0$ for $i \in [m]$.
The discrete state jumps spontaneously 
with the rate $\lambda_1$ from $j$ to $j-1$ 
for $j = 2, \ldots, m$ 
and with the rate $\lambda_2$ from $j$ to $j+1$ 
for $j = 1, \ldots, m-1$.
%
%
Initially, the continuous state is distributed uniformly on the hypercube $C = \set{x(0) \in \Real^n}{\nm{x(t)}_\infty \leq K}$; and the discrete state $q(0)$ uniformly on $[m]$.

Assume that the elements of the dynamical matrices $A_i$ are non-positive, 
then $x(t) \in C$ for all $t \in \Real$.
Therefore, we can partition the state space into $(2 \eta)^n \times m$, each of length $1/\eta$.
The hypercubes are indexed by $(i_1, \ldots, i_n, j)$ with $\abs{i_k} \in \{-\eta, \ldots, -1,1, \ldots, \eta\}$, $j \in [m]$, and $k \in [n]$.
The transition probability rates are zero except
\begin{equation*}
\begin{split}
	& \lambda((i_1, \ldots, i_n, j) \rightarrow (i_1, \ldots, i_n, j-1)) =  \lambda_1,
	\quad \lambda((i_1, \ldots, i_n, j) \rightarrow (i_1, \ldots, i_n, j+1)) =  \lambda_2,
	\\ & \lambda((i_1, \ldots, i_k+1, \ldots, i_n, j) \rightarrow (i_1, \ldots, i_k, \ldots, i_n, j))
	= c_j K \max_k \frac{\abs{i_k}}{\eta^3} + \int_S \frac{(A_j x)_k}{\eta^2} \d x_1 \ldots \d x_{k-1} \d x_{k+1} \ldots \d x_n,
\end{split}
\end{equation*}
where the reduction error $\Theta_y(t)$ in \eqref{eq:uniform eb} is less than $0.1$ for all $t$.
The desired property is 
$$\Until{\top}{\big(w(F(t,q,x)){>}p\big)}[[0,T]],$$
where 
$T$ is a time bound (could be $\infty$),
$p$ is a probability threshold, and
$w(\cdot)$ is the indicator function on a non-convex predicate stating 
exactly two elements of the continuous state are more than $\lceil K/2 \rceil$ away from the origin
(formally, the predicate holds for a continuous state $x$ \iFF\
$\Size{\Brace{i\in[n]\mid |x_i|\geq\lceil K/2 \rceil}}=2$).
It asserts that before time $T$, a probability distribution will be reached such that
the probability of a state $x$ in that distribution satisfying the aforementioned predicate is larger than $p$.
%
%

\newcommand{\DefineVS}[2]{\expandafter\providecommand\csname VS#1\endcsname{#2}}%
\newcommand{\DefineES}[2]{\expandafter\providecommand\csname ES#1\endcsname{#2}}%
\newcommand{\DefineNS}[2]{\expandafter\providecommand\csname NS#1\endcsname{#2}}%
\newcommand{\VS}[1]{\csname VS#1\endcsname}%
\newcommand{\ES}[1]{\csname ES#1\endcsname}%
\newcommand{\NS}[1]{\csname NS#1\endcsname}%

\newcommand{\Coef}{0.27718585822}
\newcommand{\Vscale}{1}%
\newcommand{\ChartType}{T}%
\newcommand{\FirstUnbounded}{F}%
\newcommand{\YName}{Name Not Given}%
\newcommand{\SmallTimes}{\mbox{\tiny$\times$}}%

\NewDocumentCommand{\AddErrorBar}{O{black}mmm}{%
\draw[color=#1] ($(#2)+(0.125,#3*\Vscale+#4*\Coef*\Vscale)-(0.05,0)$) -- ++(0.1,0);
\draw[color=#1] ($(#2)+(0.125,#3*\Vscale-#4*\Coef*\Vscale)-(0.05,0)$) -- ++(0.1,0);
\draw[color=#1] ($(#2)+(0.175,#3*\Vscale-#4*\Coef*\Vscale)-(0.05,0)$) -- 
                ($(#2)+(0.175,#3*\Vscale+#4*\Coef*\Vscale)-(0.05,0)$);}%

\npdecimalsign{.}%
\newcommand{\NDigits}{1}

\NewDocumentCommand{\ToDuration}{m}{%
\ifdim #1pt < 60pt 
\numprint{#1}\,s
\else
\pgfmathparse{int(floor(div(#1,60)))}\pgfmathresult\,m
\pgfmathparse{mod(#1,60)}\numprint{\pgfmathresult}\,s
\fi}%

\NewDocumentCommand{\ToKNumber}{m}{\numprint{#1}\,k}%
  
\NewDocumentCommand{\AddValue}{mmO{0}}{%
  \node[anchor=south] at ($(#1)+(0.125,#2*\Vscale+#3*\Coef*\Vscale)$)
  {\scalebox{0.7}
    {\rotatebox{90}
    {\ToDuration{#2}}}}; }%

\definecolor{Cbar1}{RGB}{073,141,133}%
\definecolor{Cbar2}{RGB}{118,069,039}%
\definecolor{Cbar3}{RGB}{130,020,130}%
\definecolor{Cbar4}{RGB}{090,140,065}%
\definecolor{Cbar5}{RGB}{040,040,140}%
\definecolor{Cbar6}{RGB}{070,116,193}

\newcommand{\DrawGraph}{%
\begin{tikzpicture}
\coordinate (b1)  at (0.075,0);
\coordinate (b2)  at ($(b1)+(0.30,0)$);
\coordinate (b3)  at ($(b2)+(0.30,0)$);
\coordinate (b4)  at ($(b3)+(0.50,0)$);
\coordinate (b5)  at ($(b4)+(0.30,0)$);
\coordinate (b6)  at ($(b5)+(0.30,0)$);
\coordinate (b7)  at ($(b6)+(0.50,0)$);
\coordinate (b8)  at ($(b7)+(0.30,0)$);
\coordinate (b9)  at ($(b8)+(0.30,0)$);
\coordinate (b10) at ($(b9)+(0.50,0)$);
\coordinate (b11) at ($(b10)+(0.30,0)$);
\coordinate (b12) at ($(b11)+(0.30,0)$);
\coordinate (b13) at ($(b12)+(0.50,0)$);
\coordinate (b14) at ($(b13)+(0.30,0)$);
\coordinate (b15) at ($(b14)+(0.30,0)$);
\coordinate (b16) at ($(b15)+(0.50,0)$);
\coordinate (b17) at ($(b16)+(0.30,0)$);
\coordinate (b18) at ($(b17)+(0.30,0)$);

\ifthenelse{\equal{\ChartType}{U}}
{\draw[draw=white,fill=gray!10]  (-0.1,0) rectangle (4.4,4);}
{\draw[draw=white,fill=gray!10]  (-0.1,0) rectangle (6.6,4);}

\node[anchor=north] at ($(-0.7, 0.00)+(b1)$) {\scriptsize threshold};
\node[anchor=north] at ($(-0.7,-0.30)+(b1)$) {\scriptsize \#states};
\ifthenelse{\equal{\FirstUnbounded}{T}}{%
\node[anchor=north,color=white] at ($(-0.7,-0.65)+(b1)$) {\scriptsize \#checks};
}{%
\node[anchor=north] at ($(-0.7,-0.65)+(b1)$) {\scriptsize \#checks};
}%

\ifthenelse{\equal{\FirstUnbounded}{T}}%
{\newcommand{\LowestY}{-0.70}}%
{\newcommand{\LowestY}{-1.05}}%

\ifthenelse{\equal{\ChartType}{U}}{%
\draw[color=gray!30]  ($(-1.15, 0.00)$) rectangle ($(4.4,\LowestY)$);
\draw[color=gray!30]  ($(-1.15,-0.35)$) --        ($(4.4,-0.35)$);
\draw[color=gray!30]  ($(-1.15,-0.70)$) --        ($(4.4,-0.70)$);
}{%
\draw[color=gray!30]  ($(-1.15, 0.00)$) rectangle ($(6.6,\LowestY)$);
\draw[color=gray!30]  ($(-1.15,-0.35)$) --        ($(6.6,-0.35)$);
\draw[color=gray!30]  ($(-1.15,-0.70)$) --        ($(6.6,-0.70)$);
}%

\draw[color=gray!30]  ($      (-0.10, 0.00)$) --  ($      (-0.10,\LowestY)$);
\draw[color=gray!30]  ($(b4) +(-0.12, 0.00)$) --  ($(b4) +(-0.12,\LowestY)$);
\draw[color=gray!30]  ($(b7) +(-0.12, 0.00)$) --  ($(b7) +(-0.12,\LowestY)$);
\draw[color=gray!30]  ($(b10)+(-0.12, 0.00)$) --  ($(b10)+(-0.12,\LowestY)$);
\ifthenelse{\equal{\ChartType}{U}}{}{%
\draw[color=gray!30]  ($(b13)+(-0.12, 0.00)$) --  ($(b13)+(-0.12,\LowestY)$);
\draw[color=gray!30]  ($(b16)+(-0.12, 0.00)$) --  ($(b16)+(-0.12,\LowestY)$);
}%

\node[anchor=north] at ($(0.4,-0.30)+(b1)$)  {\scriptsize $\ 4\SmallTimes20^{5}$};
\node[anchor=north] at ($(0.4,-0.30)+(b4)$)  {\scriptsize $\ 4\SmallTimes20^{10}$};
\node[anchor=north] at ($(0.4,-0.30)+(b7)$)  {\scriptsize $\ 4\SmallTimes20^{15}$};
\node[anchor=north] at ($(0.4,-0.30)+(b10)$) {\scriptsize $\ 4\SmallTimes20^{20}$};
\ifthenelse{\equal{\ChartType}{U}}{}{%
\node[anchor=north] at ($(0.4,-0.30)+(b13)$) {\scriptsize $\ 4\SmallTimes20^{30}$};
\node[anchor=north] at ($(0.4,-0.30)+(b16)$) {\scriptsize $\ 4\SmallTimes20^{40}$};
}%

\ifthenelse{\equal{\FirstUnbounded}{T}}{}{%
\nprounddigits{0}
\node[anchor=north] at ($(0.45,-0.68)+(b1)$)  {\scalebox{0.65}{$\numprint{\NS{1}}$}};
\node[anchor=north] at ($(0.45,-0.68)+(b4)$)  {\scalebox{0.65}{$\numprint{\NS{2}}$}};
\node[anchor=north] at ($(0.45,-0.68)+(b7)$)  {\scalebox{0.65}{$\numprint{\NS{3}}$}};
\node[anchor=north] at ($(0.45,-0.68)+(b10)$) {\scalebox{0.65}{$\numprint{\NS{4}}$}};
\ifthenelse{\equal{\ChartType}{U}}{}{%
\node[anchor=north] at ($(0.45,-0.68)+(b13)$) {\scalebox{0.65}{$\numprint{\NS{5}}$}};
\node[anchor=north] at ($(0.45,-0.68)+(b16)$) {\scalebox{0.65}{$\numprint{\NS{6}}$}};
}%
\nprounddigits{\NDigits}
}%

\node[anchor=north] at ($(0.1,0)+(b1)$) {\scriptsize $.2$};
\node[anchor=north] at ($(0.1,0)+(b2)$) {\scriptsize $.5$};
\node[anchor=north] at ($(0.1,0)+(b3)$) {\scriptsize $.8$};

\node[anchor=north] at ($(0.1,0)+(b4)$) {\scriptsize $.2$};
\node[anchor=north] at ($(0.1,0)+(b5)$) {\scriptsize $.5$};
\node[anchor=north] at ($(0.1,0)+(b6)$) {\scriptsize $.8$};

\node[anchor=north] at ($(0.1,0)+(b7)$) {\scriptsize $.2$};
\node[anchor=north] at ($(0.1,0)+(b8)$) {\scriptsize $.5$};
\node[anchor=north] at ($(0.1,0)+(b9)$) {\scriptsize $.8$};

\node[anchor=north] at ($(0.1,0)+(b10)$) {\scriptsize $.2$};
\node[anchor=north] at ($(0.1,0)+(b11)$) {\scriptsize $.5$};
\node[anchor=north] at ($(0.1,0)+(b12)$) {\scriptsize $.8$};

\ifthenelse{\equal{\ChartType}{U}}{}{%
\node[anchor=north] at ($(0.1,0)+(b13)$) {\scriptsize $.2$};
\node[anchor=north] at ($(0.1,0)+(b14)$) {\scriptsize $.5$};
\node[anchor=north] at ($(0.1,0)+(b15)$) {\scriptsize $.8$};

\node[anchor=north] at ($(0.1,0)+(b16)$) {\scriptsize $.2$};
\node[anchor=north] at ($(0.1,0)+(b17)$) {\scriptsize $.5$};
\node[anchor=north] at ($(0.1,0)+(b18)$) {\scriptsize $.8$};
}%

\draw[color=Cbar1,fill=Cbar1!50] (b1) rectangle ++($(0.25,\Vscale*\VS{1})$);
\draw[color=Cbar1,fill=Cbar1!50] (b2) rectangle ++($(0.25,\Vscale*\VS{2})$);
\draw[color=Cbar1,fill=Cbar1!50] (b3) rectangle ++($(0.25,\Vscale*\VS{3})$);

\draw[color=Cbar2,fill=Cbar2!50] (b4) rectangle ++($(0.25,\Vscale*\VS{4})$);
\draw[color=Cbar2,fill=Cbar2!50] (b5) rectangle ++($(0.25,\Vscale*\VS{5})$);
\draw[color=Cbar2,fill=Cbar2!50] (b6) rectangle ++($(0.25,\Vscale*\VS{6})$);

\draw[color=Cbar3,fill=Cbar3!50] (b7) rectangle ++($(0.25,\Vscale*\VS{7})$);
\draw[color=Cbar3,fill=Cbar3!50] (b8) rectangle ++($(0.25,\Vscale*\VS{8})$);
\draw[color=Cbar3,fill=Cbar3!50] (b9) rectangle ++($(0.25,\Vscale*\VS{9})$);

\draw[color=Cbar4,fill=Cbar4!50] (b10) rectangle ++($(0.25,\Vscale*\VS{10})$);
\draw[color=Cbar4,fill=Cbar4!50] (b11) rectangle ++($(0.25,\Vscale*\VS{11})$);
\draw[color=Cbar4,fill=Cbar4!50] (b12) rectangle ++($(0.25,\Vscale*\VS{12})$);

\ifthenelse{\equal{\ChartType}{U}}{}{%
\draw[color=Cbar5,fill=Cbar5!50] (b13) rectangle ++($(0.25,\Vscale*\VS{13})$);
\draw[color=Cbar5,fill=Cbar5!50] (b14) rectangle ++($(0.25,\Vscale*\VS{14})$);
\draw[color=Cbar5,fill=Cbar5!50] (b15) rectangle ++($(0.25,\Vscale*\VS{15})$);

\draw[color=Cbar6,fill=Cbar6!50] (b16) rectangle ++($(0.25,\Vscale*\VS{16})$);
\draw[color=Cbar6,fill=Cbar6!50] (b17) rectangle ++($(0.25,\Vscale*\VS{17})$);
\draw[color=Cbar6,fill=Cbar6!50] (b18) rectangle ++($(0.25,\Vscale*\VS{18})$);
}%

\AddErrorBar[Cbar1]{b1}{\VS{1}}{\ES{1}};
\AddErrorBar[Cbar1]{b2}{\VS{2}}{\ES{2}};
\AddErrorBar[Cbar1]{b3}{\VS{3}}{\ES{3}};

\AddErrorBar[Cbar2]{b4}{\VS{4}}{\ES{4}};
\AddErrorBar[Cbar2]{b5}{\VS{5}}{\ES{5}};
\AddErrorBar[Cbar2]{b6}{\VS{6}}{\ES{6}};

\AddErrorBar[Cbar3]{b7}{\VS{7}}{\ES{7}};
\AddErrorBar[Cbar3]{b8}{\VS{8}}{\ES{8}};
\AddErrorBar[Cbar3]{b9}{\VS{9}}{\ES{9}};

\AddErrorBar[Cbar4]{b10}{\VS{10}}{\ES{10}};
\AddErrorBar[Cbar4]{b11}{\VS{11}}{\ES{11}};
\AddErrorBar[Cbar4]{b12}{\VS{12}}{\ES{12}};

\ifthenelse{\equal{\ChartType}{U}}{}{%
\AddErrorBar[Cbar5]{b13}{\VS{13}}{\ES{13}};
\AddErrorBar[Cbar5]{b14}{\VS{14}}{\ES{14}};
\AddErrorBar[Cbar5]{b15}{\VS{15}}{\ES{15}};

\AddErrorBar[Cbar6]{b16}{\VS{16}}{\ES{16}};
\AddErrorBar[Cbar6]{b17}{\VS{17}}{\ES{17}};
\AddErrorBar[Cbar6]{b18}{\VS{18}}{\ES{18}};
}%

\AddValue{b1}{\VS{1}}[\ES{1}];
\AddValue{b2}{\VS{2}}[\ES{2}];
\AddValue{b3}{\VS{3}}[\ES{3}];

\AddValue{b4}{\VS{4}}[\ES{4}];
\AddValue{b5}{\VS{5}}[\ES{5}];
\AddValue{b6}{\VS{6}}[\ES{6}];

\AddValue{b7}{\VS{7}}[\ES{7}];
\AddValue{b8}{\VS{8}}[\ES{8}];
\AddValue{b9}{\VS{9}}[\ES{9}];

\AddValue{b10}{\VS{10}}[\ES{10}];
\AddValue{b11}{\VS{11}}[\ES{11}];
\AddValue{b12}{\VS{12}}[\ES{12}];

\ifthenelse{\equal{\ChartType}{U}}{}{%
\AddValue{b13}{\VS{13}}[\ES{13}];
\AddValue{b14}{\VS{14}}[\ES{14}];
\AddValue{b15}{\VS{15}}[\ES{15}];

\AddValue{b16}{\VS{16}}[\ES{16}];
\AddValue{b17}{\VS{17}}[\ES{17}];
\AddValue{b18}{\VS{18}}[\ES{18}];
}%

\node[rotate=90] at (-0.3,2) {\footnotesize\YName};

\ifthenelse{\equal{\ChartType}{U}}
{\draw  (-0.1,0) rectangle (4.4,4);}
{\draw  (-0.1,0) rectangle (6.6,4);}

\end{tikzpicture}}%

\newcommand{\DataBounded}{%
\renewcommand{\ChartType}{B}%
\renewcommand{\FirstUnbounded}{F}%

\DefineNS{1} {520383}%
\DefineNS{2} {627919}%
\DefineNS{3} {931853}%
\DefineNS{4}{1213187}%
\DefineNS{5}{1833850}%
\DefineNS{6}{2421269}%

\DefineVS{1}{3.3}\DefineES{1}{0.62}%
\DefineVS{2}{5.2}\DefineES{2}{1.25}%
\DefineVS{3}{1.0}\DefineES{3}{0.11}%

\DefineVS{4}{2.1}\DefineES{4}{0.52}%
\DefineVS{5}{0.7}\DefineES{5}{0.04}%
\DefineVS{6}{0.3}\DefineES{6}{0.02}%

\DefineVS{7}{4.5}\DefineES{7}{0.39}%
\DefineVS{8}{2.6}\DefineES{8}{0.11}%
\DefineVS{9}{1.1}\DefineES{9}{0.06}%

\DefineVS{10}{2.8}\DefineES{10}{0.19}%
\DefineVS{11}{1.8}\DefineES{11}{0.12}%
\DefineVS{12}{0.8}\DefineES{12}{0.03}%

\DefineVS{13}{7.9}\DefineES{13}{0.46}%
\DefineVS{14}{4.9}\DefineES{14}{0.24}%
\DefineVS{15}{2.1}\DefineES{15}{0.09}%

\DefineVS{16}{19.3}\DefineES{16}{0.42}%
\DefineVS{17}{12.0}\DefineES{17}{0.34}%
\DefineVS{18}{5.2}\DefineES{18}{0.17}%

\renewcommand{\Vscale}{0.165}%
\renewcommand{\YName}{Average Time}%
\renewcommand{\NDigits}{1}%
}%
\newcommand{\DataBoundedTen}{%
\renewcommand{\ChartType}{B}%
\renewcommand{\FirstUnbounded}{F}%

\DefineNS{1} {5203828}%
\DefineNS{2} {6279181}%
\DefineNS{3} {9318530}%
\DefineNS{4}{12131863}%
\DefineNS{5}{18338499}%
\DefineNS{6}{24212686}%

\DefineVS{1}{36.9}\DefineES{1}{7.31}%
\DefineVS{2}{89.2}\DefineES{2}{16.73}%
\DefineVS{3}{15.9}\DefineES{3}{3.08}%

\DefineVS{4}{18.2}\DefineES{4}{4.17}%
\DefineVS{5}{25.2}\DefineES{5}{5.18}%
\DefineVS{6}{4.9}\DefineES{6}{0.72}%

\DefineVS{7}{117.7}\DefineES{7}{24.59}%
\DefineVS{8}{131.7}\DefineES{8}{18.17}%
\DefineVS{9}{39.1}\DefineES{9}{4.52}%

\DefineVS{10}{71.2}\DefineES{10}{15.20}%
\DefineVS{11}{67.9}\DefineES{11}{10.51}%
\DefineVS{12}{15.5}\DefineES{12}{2.50}%

\DefineVS{13}{151.9}\DefineES{13}{29.12}%
\DefineVS{14}{121.3}\DefineES{14}{14.90}%
\DefineVS{15}{32.6}\DefineES{15}{3.95}%

\DefineVS{16}{283.2}\DefineES{16}{46.11}%
\DefineVS{17}{220.3}\DefineES{17}{27.78}%
\DefineVS{18}{55.5}\DefineES{18}{5.76}%

\renewcommand{\Vscale}{0.01}%
\renewcommand{\YName}{Average Time}%
\renewcommand{\NDigits}{0}%
}%
\newcommand{\DataUnboundedH}{%
\renewcommand{\ChartType}{U}%
\renewcommand{\FirstUnbounded}{T}%

\DefineNS{1}{16716254}%
\DefineNS{2}{24632599}%
\DefineNS{3}{41622146}%
\DefineNS{4}{58120115}%

\DefineVS{1}{9.9}\DefineES{1}{5.79}%
\DefineVS{2}{7.9}\DefineES{2}{5.05}%
\DefineVS{3}{12.0}\DefineES{3}{9.35}%

\DefineVS{4}{25.5}\DefineES{4}{17.07}%
\DefineVS{5}{27.2}\DefineES{5}{10.95}%
\DefineVS{6}{28.5}\DefineES{6}{17.76}%

\DefineVS{7}{111.7}\DefineES{7}{53.43}%
\DefineVS{8}{95.5}\DefineES{8}{44.13}%
\DefineVS{9}{108.1}\DefineES{9}{51.82}%

\DefineVS{10}{221.6}\DefineES{10}{118.34}%
\DefineVS{11}{232.9}\DefineES{11}{101.86}%
\DefineVS{12}{219.1}\DefineES{12}{97.40}%

\renewcommand{\Vscale}{0.0049}%
\renewcommand{\YName}{Average Time}%
\renewcommand{\NDigits}{0}%
}%
\newcommand{\DataUnboundedR}{%
\renewcommand{\ChartType}{U}%
\renewcommand{\FirstUnbounded}{F}%

\DefineNS{1}{16716254}%
\DefineNS{2}{24632599}%
\DefineNS{3}{41622146}%
\DefineNS{4}{58120115}%

\DefineVS{1}{165.9}\DefineES{1}{38.04}%
\DefineVS{2}{356.8}\DefineES{2}{96.45}%
\DefineVS{3}{76.9}\DefineES{3}{25.93}%

\DefineVS{4}{95.5}\DefineES{4}{36.54}%
\DefineVS{5}{129.5}\DefineES{5}{48.65}%
\DefineVS{6}{23.5}\DefineES{6}{9.56}%

\DefineVS{7}{200.6}\DefineES{7}{53.96}%
\DefineVS{8}{221.2}\DefineES{8}{49.72}%
\DefineVS{9}{50.9}\DefineES{9}{18.94}%

\DefineVS{10}{338.9}\DefineES{10}{79.04}%
\DefineVS{11}{305.7}\DefineES{11}{57.83}%
\DefineVS{12}{80.9}\DefineES{12}{11.84}%

\renewcommand{\Vscale}{0.0049}%
\renewcommand{\YName}{Average Time}%
\renewcommand{\NDigits}{0}%
}%
\newcommand{\DataUnboundedT}{%
\renewcommand{\ChartType}{U}%
\renewcommand{\FirstUnbounded}{F}%

\DefineNS{1}{16716254}%
\DefineNS{2}{24632599}%
\DefineNS{3}{41622146}%
\DefineNS{4}{58120115}%

\DefineVS{1}{175.9}\DefineES{1}{43.83}%
\DefineVS{2}{364.7}\DefineES{2}{101.50}%
\DefineVS{3}{88.9}\DefineES{3}{35.28}%

\DefineVS{4}{121.0}\DefineES{4}{53.61}%
\DefineVS{5}{156.7}\DefineES{5}{59.60}%
\DefineVS{6}{52.0}\DefineES{6}{27.32}%

\DefineVS{7}{312.2}\DefineES{7}{107.39}%
\DefineVS{8}{316.7}\DefineES{8}{93.85}%
\DefineVS{9}{159.0}\DefineES{9}{70.76}%

\DefineVS{10}{560.5}\DefineES{10}{197.38}%
\DefineVS{11}{538.6}\DefineES{11}{159.69}%
\DefineVS{12}{300.1}\DefineES{12}{109.24}%

\renewcommand{\Vscale}{0.0049}%
\renewcommand{\YName}{Average Time}%
\renewcommand{\NDigits}{0}%
}%

\begin{figure}[t]
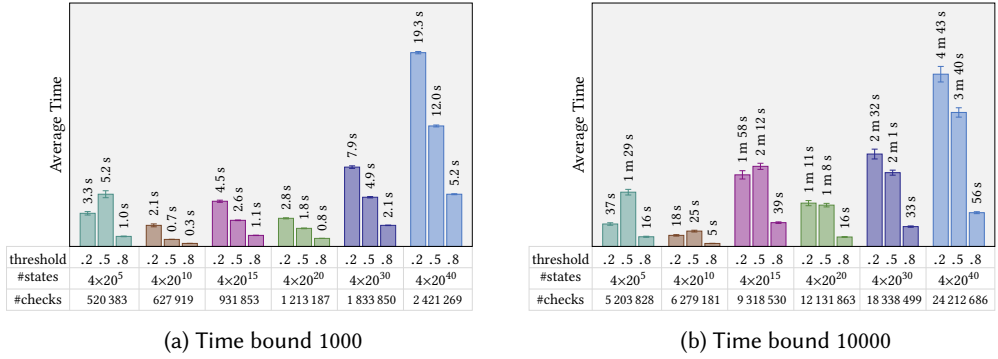

\centering
\begin{subfigure}[b]{0.5\textwidth}%
\DataBounded\scalebox{0.8}{\DrawGraph}%
\caption{Time bound $1000$}\label{fig:1000}
\end{subfigure}%
\begin{subfigure}[b]{0.5\textwidth}%
\DataBoundedTen\scalebox{0.8}{\DrawGraph}%
\caption{Time bound $10000$}\label{fig:10000}
\end{subfigure}%
\caption{Bounded Time}%
\label{fig:bounded}
\end{figure}

\begin{figure}[t]
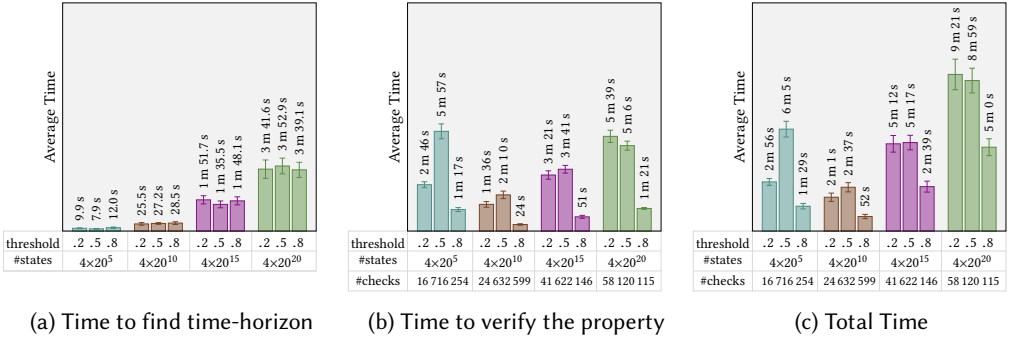

\centering
\begin{subfigure}[b]{0.33\textwidth}%
\DataUnboundedH\scalebox{0.75}{\DrawGraph}%
\caption{Time to find time-horizon}\label{fig:infH}
\end{subfigure}%
\begin{subfigure}[b]{0.33\textwidth}%
\DataUnboundedR\scalebox{0.75}{\DrawGraph}%
\caption{Time to verify the property}\label{fig:infR}
\end{subfigure}%
\begin{subfigure}[b]{0.33\textwidth}%
\DataUnboundedT\scalebox{0.75}{\DrawGraph}%
\caption{Total Time}\label{fig:infT}
\end{subfigure}%
\caption{Unbounded Time}%
\label{fig:unbounded}
\end{figure}

We ran Algorithm~\ref{fig:alg-part2-ct} on multiple instances of this problem.
In all of our experiments, 
  $\lambda_1=0.03$,
  $\lambda_2=0.02$,
  $K=1$,
  $\eta=10$, and
  $\alpha=\beta=\delta_1=0.1$.
We also fixed the number of discrete states ($m$) to be $4$. 
The dimension of the continuous state is chosen from 
$\Brace{5,10,15,20,30,40}$.
These settings result in CTMCs with a large number of states: the smallest example has $1.28{\times}10^{7}$ states, and the largest example has more than $4.39{\times}10^{52}$ states.
In all the experiments, we set $c_1=0.1$, $c_2=0.2$, $c_3=0.3$, and $c_4=0.4$.
Each instance of our simulation uses $4$ Hurwitz matrices that are generated randomly beforehand.
Finally, we used the maximum eigenvalue of the random matrices as the maximum rate of changes ($\max\Brace{\dot{y}_i(t)\mid t\in[0,T]}$) in our algorithm.

Our implementation is in Scala.
We used the Apache Commons Mathematics Library~\cite{Apache} to find eigenvalues of a matrix.
Our simulations are performed on Ubuntu 18.04 with i7-8700 CPU 3.2GHz and 16GB memory.
We ran each test $50$ times and 
report average running time as well as the 95\% confidence intervals.
Figure~\ref{fig:bounded} shows the results for the case that $T$ is bounded ($1 000$ and $10 000$), and
Figure~\ref{fig:unbounded} shows the results for the case that $T$ is set to $\infty$.
`Threshold' is the value of $p$ in our desired property.
`\#states' is the number of states in CTMC.
`\#checks' is the number of checkpoints the algorithm uses to discretize the time. 
This number does not tell how many steps the algorithm takes to simulate the system for $T$ units of time (or until it reaches the invariant distribution). 
It is the number of points in time, for which we examine the distribution of the state.
When the time is unbounded (\ie\ $T=\infty$ in Figure~\ref{fig:unbounded}), the algorithm first finds a time when the system sufficiently convergences to the invariant distribution.
It is easy to see that in the invariant distribution, our example is reduced to a birth–death process, 
for which we can compute the invariant distribution analytically.
Figure~\ref{fig:infH} shows the average amount of time our algorithm spent to find a time in which the distribution is known to be invariant.
Figure~\ref{fig:infR} shows the average amount of time the algorithm uses to verify the property after a time horizon is fixed (note that our property of interest does not hold at the invariant distribution). 
Figure~\ref{fig:infT} shows the sum of previous averages.

As expected, the time consumption of our algorithm increases logarithmically with the number of the states.
This is because in statistical model checking, the number of required samples is independent of the number of the states, and the time to draw each sample grows logarithmically with the number of the states.

\section{Conclusion} \label{sec:conclusion}

In this work, we proposed a method of verifying temporal logic formulas on stochastic hybrid systems via model reduction in both continuous-time and discrete-time.
Specifically, we reduce stochastic hybrid systems to Markov chains by partitioning the state space. We present an upper bound on the error introduced due to this reduction.
In addition, we present stochastic algorithms that verify temporal logic formulas on Markov chains with arbitrarily high confidence.

\end{document}